\newtheorem{theorem}{Theorem}
\newtheorem{proposition}{Proposition}
\newtheorem{corollary}{Corollary}
\newtheorem{lemma}{Lemma}
\newcommand{\Oc}{\mathcal{O}_{\omega_0}}
\renewcommand{\L}{\Lambda}
\newcommand{\Lc}{\L_{\omega_0}}
\newcommand{\abs}[1]{|#1|}
\newcommand{\lap}[1]{\Delta \hspace{-0.15mm} #1 }
\newcommand{\grad}[1]{\nabla \! #1}
 \renewcommand{\epsilon}{\varepsilon}
    \newcommand{\eps}{\varepsilon}
     \newcommand{\norm}[1]{\|#1\|}
      \newcommand{\bilap}[1]{\Delta^{\!2}\hspace{-0.25mm}#1 }
       \newcommand{\Ray}{\mathcal{R}}
\title{On the optimal domain for minimizing the buckling load of a clamped plate}
\date{}
\author{Kathrin Stollenwerk\footnote{Email: stollenwerk@instmath.rwth-aachen.de}}
\begin{document}
\maketitle

\noindent\textbf{Abstract } We prove the existence of an optimal domain for minimizing the buckling load among all, possibly unbounded, open subsets of $\mathbb{R}^n$ ($n\geq 2$) with given measure. Our approach is based on the extension of a 2-dimensional existence result of Ashbaugh and Bucur and on the idea of Alt and Caffarelli to focus on the eigenfunction. 

\bigskip 
\noindent\textbf{Key words:}  buckling load, clamped plate, optimization of shapes

\bigskip
\noindent\textbf{MSC2010}:  49Q10

\section{Introduction}

We consider the following variational problem. Let $\Omega \subset \mathbb{R}^n$ be an open set and define 
\[
  \Ray(v,\Omega) := \frac{\int_\Omega\abs{\lap v}^2dx}{\int_\Omega\abs{\grad v}^2dx}
\]
for $v \in W^{2,2}_0(\Omega)$. If the denominator vanishes, we set $\Ray(v,\Omega) = \infty$. The buckling load of the clamped plate $\Omega$ is defined as
\[
  \L(\Omega) := \min\{ \Ray(v,\Omega): v\in W^{2,2}_0(\Omega)\}. 
\]  
In 1951, Polya and Szegö conjectured that the ball minimizes the buckling load among all open sets of given measure (see \cite{PolyaSzego}). It is still an open question to confirm their conjecture. Up to now, there are only partial results known. 

If there exists a smooth, bounded, connected and simply connected open set $\Omega$ which minimizes the buckling load among all open sets of given measure in $\mathbb{R}^n$,  it is known that $\Omega$ is a ball (see \cite{Willms95,StoWa2015}). 

In \cite{Sto2015}, the existence of an optimal domain for minimizing the buckling load among all opens sets of a given measure which are contained in a sufficiently large ball $B \subset \mathbb{R}^{n}$, $n=2,3$, is proven. However, \cite{Sto2015} does not provide any information about the regularity of the achieved optimal domain. 

Ashbaugh and Bucur proved the existence of a plane optimal domain for minimizing $\L$ in two different settings \cite{BucurAshbaugh}. On the one hand, they prove  the existence of a minimizer in the family of connected and simply connected open stes of given measure in $\mathbb{R}^2$. On the other hand, they find an optimal set $\tilde{\Omega}$ for minimizing a relaxed version of the buckling load among all open sets of given measure in $\mathbb{R}^2$. 

In the present paper, we will adapt a part of the approach by Ashbaugh and Bucur. Therefore, let us briefly summarize their idea.
For $\omega_0>0$ let us denote 
\[
 \Oc := \{\Omega\subset \mathbb{R}^2 : \Omega \mbox{ open}, \abs{\Omega}\leq \omega_0\},
\]
where $\abs{\Omega}$ denotes the $n$-dimensional Lebesgue measure of $\Omega \subset \mathbb{R}^n$. 
Ashbaugh and Bucur start from a minimizing sequence $(\Omega_k)_k \subset \Oc$  and the sequence $(u_k)_k$ of corresponding normalized buckling eigenfunctions $u_k \in W^{2,2}_0(\Omega_k)$. Applying a concentration-compactness lemma they deduce the existence of a limit function $u \in W^{2,2}(\mathbb{R}^2)$ such that 
\begin{equation}\label{eq:intro_1}
  \Ray(u,\mathbb{R}^2) \leq \liminf_{k\to\infty} \Ray(u_k,\Omega_k) = \liminf_{k\to\infty} \L(\Omega_k) = \inf_{\Omega\in\Oc}\L(\Omega). 
\end{equation}
Since $u\in W^{2,2}(\mathbb{R}^2)$, Sobolev's embedding theory implies that $u$ is continuous and the set 
\[
  \tilde{\Omega} := \{x \in \mathbb{R}^2: u(x)\neq 0\}
\]
is an open set. Moreover, the strong $L^2$-convergence of $u_k$ to $u$ implies that $\abs{\tilde{\Omega}}\leq \omega_0$. Hence, $\tilde{\Omega}\in\Oc$.
At this point the authors face the difficulty that their ansatz does not provide any further information about $\tilde{\Omega}$ and $u$ except that $\tilde{\Omega}\in\Oc$ and $u$ is continuous. In particular, they cannot conclude that $u \in W^{2,2}_0(\tilde{\Omega})$. They circumvent that problem by introducing the relaxed Sobolev space $\tilde{W}^{2,2}_0(\tilde{\Omega})$ by
\[
  \tilde{W}^{2,2}_0(\tilde{\Omega}) := \{ v \in W^{2,2}(\mathbb{R}^2): v =0 \mbox{ a.e. in }  \mathbb{R}^2\setminus\Omega\}
\]
and the relaxed buckling load by 
\[ 
  \tilde{\L}(\Omega) := \min_{v \in \tilde{W}^{2,2}_0(\Omega)}\Ray(v,\Omega).
\]
By construction,  $u \in \tilde{W}^{2,2}_0(\tilde{\Omega})$ and, consequently, $\tilde{\Omega}$ minimizes $\tilde{\L}$ in $\Oc$, i.e. 
\[
  \tilde{\L}(\tilde{\Omega}) \stackrel{\eqref{eq:intro_1}}{\leq} \Ray(u,\tilde{\Omega}) \leq \liminf_{k\to\infty} \Ray(u_k,\Omega_k) = \inf_{\Omega\in\Oc}\L(\Omega) = \inf_{\Omega\in\Oc}\tilde{\L}(\Omega), 
\]
where \cite[Theorem 3.1]{BucurAshbaugh} provides the last equation.

In this paper, we will adapt the idea of Ashbaugh and Bucur in \cite{BucurAshbaugh} and extend it to arbitrary dimension. Contrary to their construction via $\tilde{W}^{2,2}$ we prove higher regularity of the limit function $u$. Thereby we follow the idea of Alt and Caffarelli in \cite{AltCaf81}. 
We will find that the first order derivatives of $u$ are  $\alpha$-Hölder continuous  in $\mathbb{R}^n$ for every $\alpha \in (0,1)$. 

Recall (c.f. \cite[Th. 9.1.3]{AdamsHedberg1996} or \cite[Sec. 3.3.5]{HenrotPierre_book}) that for an open set $\Omega\subset \mathbb{R}^n$ and $v \in W^{2,2}(\mathbb{R}^n)$ there holds 
\begin{center}
$v\in W^{2,2}_0(\Omega)$ if $v =\abs{\grad v} = 0 $ pointwise in $\mathbb{R}^n\setminus \Omega$.
\end{center}
Consequently, the Hölder continuity of the first order derivatives of $u$ implies that $u \in W^{2,2}_0(\Omega^\ast)$ for 
\[
  \Omega^\ast := \{x \in \mathbb{R}^n: u(x)\neq 0\ \mbox{ and } \grad{u(x)}\neq 0\}. 
\]
In addition, $\Omega^\ast$ satisfies $\abs{\Omega^\ast} =\omega_0$ and we deduce that $\Omega^\ast$ minimizes the buckling load among all open sets of given measure in $\mathbb{R}^n$.  

Moreover, we will show that the minimizer $\Omega^\ast$ is connected. 

\section{Existence of a minimizer}\label{sec:existence}

 For $\omega_0>0$ we denote the class of admissible sets by 
\[
 \Oc := \{\Omega\subset \mathbb{R}^n : \Omega \mbox{ open}, \abs{\Omega}\leq \omega_0\},
\]
where $\abs{\Omega}$ denotes the $n$-dimensional Lebesgue measure of $\Omega \subset \mathbb{R}^n$, $n\geq 2$.

Our aim is to prove the existence of a set $\Omega^\ast \in \Oc$ which minimizes $\L$ in $\Oc$. In the beginning, we follow the idea of \cite{BucurAshbaugh}.  

Let $\left(\Omega_k\right)_k \in \Oc$ be a minimizing sequence for the buckling load, i.e.
\[
  \lim_{k\to\infty}\L(\Omega_k) = \inf_{\Omega \in \Oc}\L(\Omega) =: \Lc. 
\]

By $u_k \in W^{2,2}_0(\Omega_k)$ we denote the normalized buckling eigenfunction on $\Omega_k$. Hence, $u_k$ satisfies 
\[
   \int\limits_{\Omega_k}\abs{\grad u_k}^2dx =1 \mbox{ and } \L(\Omega_k) = \int\limits_{\Omega_k}\abs{\lap u_k}^2dx
\] 
We now apply the approach by Ashbaugh and Bucur from \cite{BucurAshbaugh} to show that $(u_k)_k$ converges weakly to a limit function  $u$ in $W^{2,2}(\mathbb{R}^n)$.

We will use the following concentration-compactness lemma (see \cite{BucurAshbaugh,Lions1984}) adapted to our setting.  
\begin{lemma}\label{la:ccp}
 Let $(\Omega_k)_k \subset \Oc$ be a minimizing sequence for the buckling load in $\Oc$ and $(u_k)_k$ be the sequence of corresponding eigenfunctions. Then there exists a subsequence $(u_k)_k$ such that one of the three following situations occurs.
  \begin{enumerate}
  \item \textbf{Compactness.} $\exists (y_k)_k\subset \mathbb{R}^n$ such that $\forall\eps>0$, $\exists R <\infty$ and 
  \[
    \forall k \in \mathbb{N} \quad \int\limits_{B_R(y_k)}\abs{\grad u_k}^2dx \geq 1-\eps. 
  \]
  \item \textbf{Vanishing.} $\forall R \in (0,\infty)$ 
  \[
     \lim_{k\to\infty}\sup_{y \in \mathbb{R}^n}\int_{B_R(y)}\abs{\grad u_k}^2dx =0.
  \]
  \item \textbf{Dichotomy.} There exists an $\beta \in (0,1)$ such that $\forall \eps>0 $ there exist two bounded sequences $(u_k^1)_k$, $(u_k^2)_k \subset H^{2,2}(\mathbb{R}^n)$ such that:
  \begin{align}
     \norm{\grad u_k - \grad u_k^1 - \grad u_k^2}_{L^2(\mathbb{R}^2,\mathbb{R}^2)} \leq \delta(\eps) \stackrel{k\to\infty}{\longrightarrow} 0^+, \tag{a} \\
     \abs{\,\int\limits_{\mathbb{R}^n}\abs{\grad u_k^1}^2dx - \beta} \to 0 \quad \mbox{and } \quad  \abs{\,\int\limits_{\mathbb{R}^n}\abs{\grad u_k^2}^2dx - (1-\beta)} \to 0,\tag{b} \\
     \operatorname{dist}(\operatorname{supp}(u_k^1),\operatorname{supp}(u_k^2)) \stackrel{k\to\infty}{\longrightarrow}  \to \infty, \tag{c} \\
     \liminf_{k\to\infty}\left[ \int\limits_{\mathbb{R}^n} \abs{\lap u_k}^2 - \abs{\lap{u_k^1}}^2-\abs{\lap u_k^2}^2dx \right] \geq 0. \tag{d}
  \end{align}
  \end{enumerate} 
\end{lemma}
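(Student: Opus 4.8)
The plan is to apply P.-L. Lions' concentration--compactness principle \cite{Lions1984} to the probability measures $\mu_k:=\abs{\grad u_k}^2\,dx$ on $\mathbb{R}^n$, the only genuinely new feature being that the energy to be tracked in alternative \textbf{(d)} is the second-order quantity $\int\abs{\lap{u_k}}^2\,dx$ rather than a Dirichlet integral. First I would record that, after extending each $u_k$ by zero, the sequence $(u_k)_k$ is bounded in $W^{2,2}(\mathbb{R}^n)$: indeed $\norm{\grad u_k}_{L^2(\mathbb{R}^n)}=1$ by normalization, $\norm{\lap{u_k}}_{L^2(\mathbb{R}^n)}^2=\L(\Omega_k)$ is bounded since $(\Omega_k)_k$ is minimizing, the bound $\abs{\Omega_k}\leq\omega_0$ together with the Faber--Krahn (Poincaré) inequality on sets of finite measure gives $\norm{u_k}_{L^2(\mathbb{R}^n)}=\norm{u_k}_{L^2(\Omega_k)}\leq C(\omega_0)$, and Calderón--Zygmund estimates then control $\norm{D^2u_k}_{L^2(\mathbb{R}^n)}$. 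In particular each $\mu_k$ is a probability measure.

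Next I would introduce the concentration functions $Q_k(R):=\sup_{y\in\mathbb{R}^n}\mu_k(B_R(y))$. Each $Q_k$ is nondecreasing with values in $[0,1]$, so a diagonal (Helly) argument yields a subsequence along which $Q_k(R)\to Q(R)$ for every $R>0$; put $\beta:=\lim_{R\to\infty}Q(R)\in[0,1]$. If $\beta=0$ then $Q_k(R)\to Q(R)\leq\beta=0$ for each fixed $R$, which is precisely \textbf{Vanishing}. If $\beta=1$, then for each $\eps>0$ one picks $R$ with $Q(R)>1-\eps$ and centres $y_k^{\eps}$ with $\mu_k(B_R(y_k^{\eps}))>1-\eps$ for $k$ large; since two balls each carrying $\mu_k$-mass $>\tfrac12$ must intersect, these centres can be consolidated into a single sequence $(y_k)_k$ (after enlarging radii by a fixed factor), giving \textbf{Compactness}.

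The substantial case is $\beta\in(0,1)$. Fix $\eps>0$, choose $R_0$ with $Q(R_0)>\beta-\eps$, and for $k$ large pick $y_k$ with $\mu_k(B_{R_0}(y_k))>\beta-\eps$. Since $\mu_k(B_{mR_0}(y_k))\leq Q_k(mR_0)$, $Q_k(mR_0)\to Q(mR_0)$ and $Q(mR_0)\to\beta$ as $m\to\infty$, a diagonal choice $m_k\to\infty$ gives $\mu_k(B_{m_kR_0}(y_k))\to\beta$ and hence $\mu_k\bigl(B_{m_kR_0}(y_k)\setminus B_{R_0}(y_k)\bigr)\to0$. Partition this annulus into $N_k:=\lfloor\sqrt{m_k}\rfloor\to\infty$ concentric sub-annuli of comparable width $w_k\sim\sqrt{m_k}\,R_0\to\infty$; at least one of them, $A_k$, satisfies $\mu_k(A_k)\leq\mu_k(B_{m_kR_0}(y_k)\setminus B_{R_0}(y_k))/N_k\to0$. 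Take smooth cut-offs $\xi_k,\psi_k$ which are, respectively, $1$ on the ball bounded by the inner edge of $A_k$ and $0$ past the middle of $A_k$, and $0$ before the middle of $A_k$ and $1$ past its outer edge, with $\norm{\grad\xi_k}_\infty+\norm{\grad\psi_k}_\infty\leq C/w_k$ and $\norm{D^2\xi_k}_\infty+\norm{D^2\psi_k}_\infty\leq C/w_k^2$. Set $u_k^1:=\xi_k u_k$, $u_k^2:=\psi_k u_k$: these are bounded in $W^{2,2}(\mathbb{R}^n)$, their supports are separated by $\gtrsim w_k\to\infty$ (that is \textbf{(c)}), and the remainder $u_k-u_k^1-u_k^2=(1-\xi_k-\psi_k)u_k$ is supported in $A_k$, so $\norm{\grad u_k-\grad u_k^1-\grad u_k^2}_{L^2}\leq\norm{\grad u_k}_{L^2(A_k)}+\tfrac{C}{w_k}\norm{u_k}_{L^2}\to0$, which gives \textbf{(a)}; the same computation over $B_{R_0}(y_k)$ and $B_{m_kR_0}(y_k)$ yields $\int\abs{\grad u_k^1}^2\,dx\to\beta$ and $\int\abs{\grad u_k^2}^2\,dx\to1-\beta$, i.e.\ \textbf{(b)} (a final diagonal extraction in $\eps$ removes the $\eps$-slack and turns the bound in \textbf{(a)} into a genuine null sequence). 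For \textbf{(d)}, expand $\lap{u_k^1}=\xi_k\lap{u_k}+2\grad\xi_k\cdot\grad u_k+u_k\lap{\xi_k}$ and likewise for $u_k^2$: the correction terms have $L^2$-norm $\lesssim\tfrac{1}{w_k}\norm{\grad u_k}_{L^2(A_k)}+\tfrac{1}{w_k^2}\norm{u_k}_{L^2}\to0$, the leading terms $\xi_k\lap{u_k}$ and $\psi_k\lap{u_k}$ have disjoint supports, and $\xi_k^2+\psi_k^2\leq1$ pointwise, so
\[
  \int_{\mathbb{R}^n}\!\abs{\lap{u_k^1}}^2+\abs{\lap{u_k^2}}^2\,dx\leq\int_{\mathbb{R}^n}\!(\xi_k^2+\psi_k^2)\abs{\lap{u_k}}^2\,dx+o(1)\leq\int_{\mathbb{R}^n}\!\abs{\lap{u_k}}^2\,dx+o(1),
\]
which is \textbf{(d)}.

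I expect the main obstacle to be precisely this last point: since $\lap{(\xi_k u_k)}$ differs from $\xi_k\lap{u_k}$ by terms weighted with one and two negative powers of the transition width, the separating annulus $A_k$ must be chosen simultaneously wide ($w_k\to\infty$) and of vanishing gradient energy ($\mu_k(A_k)\to0$) while still keeping $\operatorname{supp}u_k^1$ and $\operatorname{supp}u_k^2$ far apart. Splitting the transition region into $\sqrt{m_k}$ sub-annuli reconciles these three requirements; everything else is the classical Lions argument.
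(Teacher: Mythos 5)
Your approach matches what the paper intends: the paper's proof is a one-line citation to Lions' concentration--compactness argument \cite{Lions1984} (via \cite{BucurAshbaugh}) built on exactly the concentration function $Q_k(R):=\sup_y\int_{B_R(y)}\abs{\grad u_k}^2dx$ that you introduce, and you carry out the Helly extraction, trichotomy on $\beta$, and annulus-splitting steps of Lions' proof in detail, with the nonstandard step being the control of the bi-Laplacian energy in (d). Your resolution of that step -- choosing the separating annulus $A_k$ simultaneously wide, $w_k\to\infty$, and of vanishing gradient mass, so that the $w_k^{-1}$ and $w_k^{-2}$ weighted cut-off corrections disappear in $L^2$, while $\xi_k^2+\psi_k^2\leq 1$ handles the leading term -- is correct and is precisely the modification needed here.

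There is one genuine defect in the construction as written: you make $\xi_k$ vanish ``past the middle of $A_k$'' and $\psi_k$ vanish ``before the middle of $A_k$'', so $\operatorname{supp}(\xi_k)\subset\{|x-y_k|\leq\rho_k\}$ and $\operatorname{supp}(\psi_k)\subset\{|x-y_k|\geq\rho_k\}$ share the middle sphere $\{|x-y_k|=\rho_k\}$, and hence $\operatorname{dist}(\operatorname{supp}(u_k^1),\operatorname{supp}(u_k^2))=0$, not $\gtrsim w_k$; claim (c) fails as stated. The fix is the standard one: split $A_k$ into three concentric shells of comparable width $\sim w_k/3$, let $\xi_k$ drop from $1$ to $0$ across the innermost shell, let $\psi_k$ rise from $0$ to $1$ across the outermost shell, and set $\xi_k\equiv\psi_k\equiv 0$ on the middle shell. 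This produces a genuine gap of width $\sim w_k/3\to\infty$, the derivative bounds $\norm{\grad\xi_k}_\infty,\norm{\grad\psi_k}_\infty\lesssim w_k^{-1}$ and $\norm{D^2\xi_k}_\infty,\norm{D^2\psi_k}_\infty\lesssim w_k^{-2}$ persist, and the estimates in (a), (b) and (d) go through unchanged since $1-\xi_k-\psi_k$ is still supported in $A_k$. With that adjustment your proof is complete and is essentially the one the paper delegates to the references.
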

\begin{proof}
  As mentioned in the proof of \cite[Lemma 3.5]{BucurAshbaugh} the proof is done by considering the concentration function
  \[
   R \to Q_k(R) := \sup_{y\in\mathbb{R}^n}\,\int\limits_{B_R(y)}\abs{\grad u_k}^2dx 
  \]
  for $R \in [0,\infty)$ and following the same steps as in \cite{Lions1984}.
\end{proof}

We will see that for the sequence of eigenfunctions $(u_k)_k$ the case of vanishing and dichotomy cannot occur. Hence, $(u_k)_k$ contains a subsequence, which we again denote by $(u_k)_k$, for which the case of compactness holds true. This compactness will imply the weak convergence of   $u_k$ to a limit function $u$ in $W^{2,2}(\mathbb{R}^2)$. Moreover, the compactness yields that $u_k$ converges to $u$ strongly in $W^{1,2}(\mathbb{R}^n)$.

The case of dichotomy can be disproved in exactly the same way as in \cite{BucurAshbaugh}. For the sake of brevity, we forgo the repetition of this argument. 

In order to disprove the case of vanishing we slightly differ from \cite{BucurAshbaugh}. Nevertheless, we adopt the following lemma \cite[Lemma 3.3]{BucurVarchon} (or \cite[Lemma 6]{Lieb1983}) which is used in \cite{BucurAshbaugh} and which we will apply to disprove the vanishing, as well. 

\begin{lemma}\label{la:trafolemma} 
   Let $(w_k)_k$ be a bounded sequence in $W^{1,2}(\mathbb{R}^n)$ such that $\norm{w_k}_{L^2(\mathbb{R}^n)} = 1$ and $w_k\in W^{1,2}_0(D_k)$ for a $D_k \in \Oc$. There exists a sequence of vectors $(y_k)_k \subset \mathbb{R}^n$ such that the sequence $(w_k(\cdot+y_k))_k$ does not possess a subsequence converging weakly to zero in $W^{1,2}_0(\mathbb{R}^n)$.
\end{lemma}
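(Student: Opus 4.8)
The plan is to reduce the assertion to a \emph{non-vanishing} estimate for the $L^2$-mass of $w_k$ on unit balls, and then to exclude vanishing by exploiting the measure bound $\abs{D_k}\le\omega_0$ via the Sobolev embedding. Set
\[
  \eta_k := \sup_{y\in\mathbb{R}^n}\ \int_{B_1(y)}\abs{w_k}^2\,dx ,
\]
so that the crux will be the claim $c_0:=\liminf_{k\to\infty}\eta_k>0$. Granting this claim, I would finish as follows: for every $k$ with $\eta_k\ge c_0/2$ pick $y_k\in\mathbb{R}^n$ with $\int_{B_1(y_k)}\abs{w_k}^2\,dx\ge\tfrac12\eta_k\ge c_0/4$, and set $y_k:=0$ for the finitely many remaining indices. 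Writing $v_k:=w_k(\cdot+y_k)$, the sequence $(v_k)_k$ is bounded in $W^{1,2}(\mathbb{R}^n)=W^{1,2}_0(\mathbb{R}^n)$ since the norm is translation invariant, and $\int_{B_1}\abs{v_k}^2\,dx\ge c_0/4$ for all large $k$. If a subsequence $(v_{k_j})_j$ converged weakly to $0$ in $W^{1,2}(\mathbb{R}^n)$, then by the compact embedding $W^{1,2}(B_1)\hookrightarrow\hookrightarrow L^2(B_1)$ it would converge to $0$ strongly in $L^2(B_1)$, contradicting $\int_{B_1}\abs{v_{k_j}}^2\,dx\ge c_0/4>0$. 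Hence $(v_k)_k$ admits no weakly-null subsequence, which is the assertion.

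The real work is the claim, and this is exactly where the measure constraint enters. Suppose instead that $\eta_k\to0$ along a subsequence, still denoted $(w_k)_k$. By Lions' vanishing lemma (\cite{Lions1984}; this is essentially the statement \cite[Lemma~3.3]{BucurVarchon} / \cite[Lemma~6]{Lieb1983} being invoked), a sequence bounded in $W^{1,2}(\mathbb{R}^n)$ whose concentration function $\sup_y\int_{B_1(y)}\abs{\cdot}^2\,dx$ tends to $0$ converges to $0$ strongly in $L^p(\mathbb{R}^n)$ for every $p\in(2,2^\ast)$, where $2^\ast=\tfrac{2n}{n-2}$ if $n\ge3$ and $2^\ast=\infty$ if $n=2$. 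Fixing one such $p$ and using that $w_k=0$ a.e. outside $D_k$, Hölder's inequality together with $\abs{D_k}\le\omega_0$ gives
\[
  1=\int_{D_k}\abs{w_k}^2\,dx\ \le\ \abs{D_k}^{1-\frac2p}\Big(\int_{D_k}\abs{w_k}^p\,dx\Big)^{\frac2p}\ \le\ \omega_0^{\,1-\frac2p}\,\norm{w_k}_{L^p(\mathbb{R}^n)}^2\ \longrightarrow\ 0 ,
\]
which is absurd. Therefore $\liminf_k\eta_k>0$, completing the proof.

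If one prefers to avoid citing Lions' lemma, the required decay $\norm{w_k}_{L^p(\mathbb{R}^n)}\to0$ for $p=\tfrac{2(n+2)}{n}\in(2,2^\ast)$ can be produced directly: cover $\mathbb{R}^n$ by unit balls $B_1(z_i)$ of bounded overlap, interpolate $\norm{w_k}_{L^p(B_1(z_i))}$ between $L^2(B_1(z_i))$ and $L^{2^\ast}(B_1(z_i))$ on each ball — between $L^2$ and any finite $L^q$ when $n=2$ — apply the local Sobolev inequality, choose the interpolation exponent so that the Sobolev factor appears with power $2$, and sum over $i$, obtaining $\norm{w_k}_{L^p(\mathbb{R}^n)}^p\lesssim\eta_k^{\,1-2/p}\,\norm{w_k}_{W^{1,2}(\mathbb{R}^n)}^2$.

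I expect the exclusion of the vanishing scenario to be the only substantive obstacle; the reduction step is soft, resting on nothing more than translation invariance of the norm and Rellich--Kondrachov on the fixed ball $B_1$. One small but genuine bookkeeping point is that vanishing has to be excluded along \emph{every} subsequence, so one really obtains $\liminf_k\eta_k>0$ and not merely $\limsup_k\eta_k>0$ — this is precisely what makes the choice of $y_k$ legitimate for all large $k$ and hence forbids a weakly-null translated subsequence.
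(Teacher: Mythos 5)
Your proof is correct. The paper does not actually supply an argument for this lemma: it is stated as a citation to \cite[Lemma~3.3]{BucurVarchon} and \cite[Lemma~6]{Lieb1983}, and the text immediately moves on to \emph{apply} it. So you have done more than the paper asks for by giving a self-contained derivation. Your two-step structure is the natural one: (i) reduce the claim to the strict positivity of $\liminf_k\eta_k$ with $\eta_k=\sup_y\int_{B_1(y)}\abs{w_k}^2$, via translation invariance and Rellich--Kondrachov on the fixed ball; (ii) exclude vanishing by combining Lions' vanishing lemma with the single use of the hypothesis $\abs{D_k}\le\omega_0$ through H\"older. Step (ii) is exactly where the measure constraint and the Sobolev gap between $L^2$ and $L^{2^\ast}$ interact, and that is indeed the same mechanism underlying Lieb's lemma and its variant in Bucur--Varchon (which essentially localize to unit cubes, interpolate with Sobolev, and sum -- the alternative you sketch in your final paragraph). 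You are also right to insist on $\liminf$ rather than $\limsup$, since the conclusion must hold for \emph{every} subsequence of the translated sequence.

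One small bookkeeping slip in the optional last paragraph: with $p\theta=2$ and $\tfrac1p=\tfrac{1-\theta}{2}+\tfrac{\theta}{2^\ast}$, the power of $\eta_k$ produced by summing the local interpolation estimates is $\eta_k^{(p-2)/2}$, not $\eta_k^{1-2/p}$ (these differ, e.g. $2/n$ versus $2/(n+2)$ for your choice $p=2(n+2)/n$). Both exponents are positive, so the conclusion is unaffected; only the displayed constant is off.
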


Now let us assume that for a subsequence of $(u_k)_k$, again denoted by $(u_k)_k$, the case of vanishing occurs. Hence, for every $R>0$ there holds
\begin{equation}\label{eq:vanishing}
  \lim_{k\to\infty} \sup_{y \in \mathbb{R}^n} \int\limits_{B_R(y)}\abs{\grad u_k}^2dx =0.
\end{equation}
Since there holds $\norm{\grad u}_{L^2(\mathbb{R}^n)}=1$ for every $k \in \mathbb{N}$, we obtain for at least one $1\leq l_k \leq n$
\[
   \int\limits_{\mathbb{R}^n} \abs{\partial_{l_k} u_k}^2dx \geq \frac{1}{n}. 
\]
We now consider the sequence $(\partial_{l_k} u_k)_k$. Then $\partial_{l_k} u_k \in W^{1,2}_0(\Omega_k)$ and 
\[
 \frac{1}{\sqrt{n}} \leq \norm{\partial _{l_k} u_k}_{L^2(\mathbb{R}^n)} := c_k. 
\]
The sequence $(v_k)_k$ given by $v_k := c_k^{-1}\partial_{l_k}u_k$ then satisfies the assumptions of Lemma \ref{la:trafolemma}. Consequently, there exists a sequence $(y_k)_k \subset \mathbb{R}^n$ such that the sequence $(v_k(\cdot+y_k))_k \subset W^{1,2}(\mathbb{R}^n)$ does not posses a subsequence which converges weakly to zero in $W^{1,2}(\mathbb{R}^n)$. However, the sequence $(v_k(\cdot+y_k))_k$ is uniformly bounded in $W^{1,2}(\mathbb{R}^n)$ because of the normalization. Hence, there exists a $v \in W^{1,2}(\mathbb{R}^n)$ such that a subsequence of $(v_k(\cdot+y_k))_k$ converges weakly in $W^{1,2}(\mathbb{R}^n)$ to $v$. In particular, there holds 
\[
   v_k(\cdot+y_k) \stackrel{k\to\infty}{\rightharpoonup} v \mbox{ in } W^{1,2}(B_R(0)) \mbox{ for every } R>0 
\]
and 
\[
   v_k(\cdot+y_k) \stackrel{k\to\infty}{\longrightarrow} v \mbox{ in }L^2(B_R(0)) \mbox{ for every } R>0.
\]
Thus, we obtain 
\begin{align*}
  \norm{v}^2_{L^2(B_R(0))} &= \lim_{k\to\infty}\norm{v_k(\cdot+y_k)}^2_{L^2 (B_R(0))}=\lim_{k\to\infty}\frac{1}{c_k^2}\int\limits_{B_R(0)}\abs{\partial_l u_k(x+y_k)}^2dx \\
  &\leq n \lim_{k\to\infty}\int\limits_{B_R(y)}\abs{\grad u_k}^2dx \\
  &\leq n \lim_{k\to\infty} \sup_{y \in\mathbb{R}^n} \int\limits_{B_R(y)}\abs{\grad u_k}^2dx \stackrel{\eqref{eq:vanishing}}{=} 0. 
\end{align*}
Hence, $v =0$ in $L^2(B_R(0))$ and since $v$ is the weak limit of $v_k(\cdot+y_k)$ this is a contradiction to Lemma \ref{la:trafolemma}. Therefore, the case of vanishing cannot occur. 

Consequently, the case of compactness must occur. Following the lines of \cite{BucurAshbaugh} we find that there exists a sequence $(y_k)_k \subset \mathbb{R}^n$ and an $u \in W^{2,2}(\mathbb{R}^n)$ such that 
\begin{equation}\label{eq:weak_conv}
  u_k(\cdot + y_k) \rightharpoonup u \mbox{ in } W^{2,2}(\mathbb{R}^n)
\end{equation}
and, since we are in the compactness case of Lemma \ref{la:ccp}, 
\begin{equation}\label{eq:norm_u}
   \int\limits_{\mathbb{R}^n}\abs{\grad u}^2dx =1.
\end{equation}

From now on, we set
\[
   u_k = u_k(\cdot+y_k)\quad  \mbox{  and  }  \quad \Omega_k = \Omega_k + y_k,
\]
where $(y_k)_k$ is given above. This is possible without loss of generality because of the translational invariance of the buckling load.

We now show that $u_k$ converges strongly to $u$ in $W^{1,2}(\mathbb{R}^n)$. Since this observation will be crucial for constructing an optimal domain in Section \ref{subsec:domain}, we give a detailed proof although we follow the lines of \cite{BucurAshbaugh}.

\begin{lemma}\label{la:strong_W^1,2_convergence}
  There holds 
  \[
     u_k \stackrel{k\to\infty}{\longrightarrow} u \mbox{ in } W^{1,2}(\mathbb{R}^n). 
  \]
\end{lemma}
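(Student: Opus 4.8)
The plan is to use that we are in the compactness case of Lemma~\ref{la:ccp}. After the normalizing translation this reads: for every $\eps>0$ there is an $R=R(\eps)<\infty$ such that $\int_{B_R(0)}\abs{\grad u_k}^2\,dx\geq 1-\eps$ for all $k$, and since $\norm{\grad u_k}_{L^2(\mathbb{R}^n)}=1$ this is a uniform tightness statement for the gradient mass, $\int_{\mathbb{R}^n\setminus B_R(0)}\abs{\grad u_k}^2\,dx\leq\eps$. From here the argument is the classical localize--Rellich--control-the-tails scheme.

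First, from \eqref{eq:weak_conv} we have $u_k\rightharpoonup u$ in $W^{2,2}(\mathbb{R}^n)$, hence $(u_k)_k$ is bounded in $W^{2,2}(\mathbb{R}^n)$ and $u_k\rightharpoonup u$ in $W^{1,2}(\mathbb{R}^n)$; by the compact embedding $W^{2,2}(B_\rho)\hookrightarrow\hookrightarrow W^{1,2}(B_\rho)$ and a diagonal argument, $u_k\to u$ strongly in $W^{1,2}(B_\rho)$ for every $\rho>0$. Second --- and this is where the measure constraint enters --- I would fix $\eps>0$ and $R=R(\eps)$ as above, choose a cut-off $\eta\in C^\infty(\mathbb{R}^n)$ with $\eta\equiv 0$ on $B_R(0)$, $\eta\equiv 1$ on $\mathbb{R}^n\setminus B_{2R}(0)$, $0\leq\eta\leq 1$ and $\abs{\grad\eta}\leq c/R$, and observe that $\eta u_k\in W^{1,2}_0\big(\Omega_k\setminus\overline{B_R(0)}\big)$, a set of measure at most $\omega_0$. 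The Faber--Krahn inequality then gives $\int\abs{\eta u_k}^2\,dx\leq \lambda_1(B)^{-1}\int\abs{\grad(\eta u_k)}^2\,dx$ with $B$ a ball of measure $\omega_0$; expanding $\grad(\eta u_k)=\eta\,\grad u_k+u_k\,\grad\eta$ and using $\abs{\grad\eta}\leq c/R$, the tightness of $\int\abs{\grad u_k}^2$ and the uniform bound on $\norm{u_k}_{L^2(\mathbb{R}^n)}$, one obtains $\int_{\mathbb{R}^n\setminus B_{2R}(0)}\abs{u_k}^2\,dx\leq C(\omega_0)\,(\eps+R^{-2})$ for all $k$, which after possibly enlarging $R$ is $\leq C(\omega_0)\eps$; by weak lower semicontinuity the same bound holds with $u$ in place of $u_k$.

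Finally I would combine the two steps. The local strong convergence gives $\norm{u_k}^2_{L^2(B_{2R})}\to\norm{u}^2_{L^2(B_{2R})}$, while the uniform tail estimate makes the remainders $O(\eps)$, so $\norm{u_k}_{L^2(\mathbb{R}^n)}\to\norm{u}_{L^2(\mathbb{R}^n)}$; together with $\norm{\grad u_k}_{L^2(\mathbb{R}^n)}=1=\norm{\grad u}_{L^2(\mathbb{R}^n)}$ from \eqref{eq:norm_u} this yields $\norm{u_k}_{W^{1,2}(\mathbb{R}^n)}\to\norm{u}_{W^{1,2}(\mathbb{R}^n)}$. Since $W^{1,2}(\mathbb{R}^n)$ is a Hilbert space and $u_k\rightharpoonup u$, convergence of the norms upgrades this to strong convergence, which is the assertion. (Equivalently, one splits $\norm{\grad u_k-\grad u}^2_{L^2(\mathbb{R}^n)}$ and $\norm{u_k-u}^2_{L^2(\mathbb{R}^n)}$ over $B_{2R}(0)$ and its complement, bounds the interior parts by the Rellich step and the exterior parts by $\leq 2\eps+2\int_{\mathbb{R}^n\setminus B_{2R}}\abs{\grad u}^2$ resp.\ $\leq 2C(\omega_0)\eps+2\int_{\mathbb{R}^n\setminus B_{2R}}\abs{u}^2$, and then lets $\eps\to 0$.)

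The only genuinely non-formal point is the tightness of the $L^2$-mass of $u_k$ itself: the compactness alternative delivers tightness of the gradient mass, but strong $W^{1,2}$-convergence also needs the $L^2$-mass of $u_k$ not to escape to infinity. Transferring the former into the latter is exactly what the localized Poincar\'e/Faber--Krahn estimate above does, and it crucially uses that every $u_k$ has vanishing boundary values on a domain of measure $\leq\omega_0$ (so that the Poincar\'e constant is uniform in $k$, regardless of whether $\Omega_k$ is bounded); this is the step I expect to be the heart of the proof, the rest being standard.
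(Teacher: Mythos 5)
Your proof is correct, but it takes a genuinely different and noticeably heavier route than the paper's. The paper proves the gradient convergence purely algebraically: since both $u$ and each $u_k$ satisfy $\int_{\mathbb{R}^n}\abs{\grad \cdot}^2\,dx=1$ (the latter by normalization, the former from \eqref{eq:norm_u}, already extracted from the compactness alternative), one writes $\int\abs{\grad u-\grad u_k}^2\,dx = 2 - 2\int\grad u\cdot\grad u_k\,dx$ and lets the weak $W^{2,2}$-convergence kill the cross term, so $\grad u_k\to\grad u$ in $L^2$ in one line. For the $L^2$-convergence of $u_k$ itself, the paper notes $u_l-u_k\in W^{2,2}_0(\Omega_l\cup\Omega_k)$ with $\abs{\Omega_l\cup\Omega_k}\leq 2\omega_0$, applies the Faber--Krahn/Poincar\'e inequality to the \emph{difference}, and concludes that $(u_k)$ is Cauchy in $L^2$ because $(\grad u_k)$ already is. No cut-offs, no Rellich on balls, no diagonal argument. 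Your scheme --- local Rellich compactness plus a tail estimate via $\eta u_k$ and Faber--Krahn, then upgrading weak to strong via norm convergence --- certainly works (one should say the Poincar\'e step only needs $\abs{\{\eta u_k\neq 0\}}\leq\omega_0$ rather than literal membership in $W^{1,2}_0$ of $\Omega_k\setminus\overline{B_R}$, but the estimate is the same), and it is more self-contained in that it essentially rederives the preservation of gradient mass rather than quoting \eqref{eq:norm_u}. What the paper's version buys is brevity and the observation that the measure constraint lets one apply Poincar\'e directly to $u_l-u_k$ on $\Omega_l\cup\Omega_k$, avoiding any localization. Both proofs, you correctly identify, hinge on the same structural fact: the uniform measure bound $\abs{\Omega_k}\leq\omega_0$ gives a $k$-independent Poincar\'e constant, which is what transfers gradient control into $L^2$ control.
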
 
\begin{proof}
  We use the notation above. Recall, that $u_k = u_k(\cdot+y_k)$ and $\Omega_k = \Omega_k + y_k $. Then we get from \eqref{eq:norm_u}
  \[
    \int\limits_{\mathbb{R}^n}\abs{\grad u -\grad u_k}^2dx = 2- 2\int\limits_{\mathbb{R}^n}\grad u.\grad u_k\,dx
  \] 
and the weak convergence of $(u_k)_k$ to $u$ in $W^{2,2}(\mathbb{R}^n)$  yields
  \[
     \int\limits_{\mathbb{R}^n}\abs{\grad u -\grad u_k}^2dx \stackrel{k\to\infty}{\longrightarrow} 0. 
  \]
  Thus, $(\grad u_k)_k$ converges to $\grad u$ in $L^2(\mathbb{R}^n)$ and, in particular,  $(\grad u_k)_k$ is a Cauchy sequence in $L^2(\mathbb{R}^n,\mathbb{R}^n)$.
  Now let $l,k \in \mathbb{N}$. Then $u_l-u_k \in W^{2,2}_0(\Omega_l\cup\Omega_k)$ and applying Poincaré's inequality we obtain 
  \begin{align*}
     \int\limits_{\Omega_l\cup\Omega_k}(u_l-u_k)^2dx &\leq \left(\frac{\abs{\Omega_l\cup\Omega_k}}{\omega_n}\right)^\frac{2}{n}\int\limits_{\Omega_l\cup\Omega_k}\abs{\grad(u_l-u_k)}^2dx  \\
     &\leq \left(\frac{2\omega_0}{\omega_n}\right)^\frac{2}{n}\int\limits_{\Omega_l\cup\Omega_k}\abs{\grad(u_l-u_k)}^2dx \stackrel{k\to\infty}{\longrightarrow} 0.
  \end{align*}
  Thus, $(u_k)_k$ is a Cauchy sequence in $L^2(\mathbb{R}^n)$, which converges weakly in $L^2(\mathbb{R}^n)$ to $u$. Consequently, $u_k \stackrel{k\to\infty}{\longrightarrow} u$ in $L^2(\mathbb{R}^n)$. This proves the claim. 
\end{proof}

As a consequence of \eqref{eq:weak_conv} and Lemma \ref{la:strong_W^1,2_convergence}  we obtain that 
\begin{equation}\label{eq:min_R}
  \Ray(u,\mathbb{R}^n) \leq \liminf_{k\to \infty} \Ray(u_k,\Omega_k) = \inf_{\Omega\in\Oc}\L(\Omega).
\end{equation}

 The following proposition summarizes what we have achieved so far.

\begin{proposition}\label{prop:survey_minimizing_sequence}
  Let $(\Omega_k)_k \subset \Oc $ be a minimizing sequence for the buckling load in $\Oc$ and $(u_k)_k$ be the sequence of corresponding normalized eigenfunctions. 
Then there exists a sequence $(y_k)_k \subset \mathbb{R}^n$ such that $u_k(\cdot + y_k)$ is a normalized eigenfunction on $\Omega_k+y_k$ and, denoting $u_k = u_k(\cdot+y_k)$ and $\Omega_k = \Omega_k+y_k$, there exists a subsequence, again denoted by $(u_k)_k$, and an $u \in W^{2,2}(\mathbb{R}^n)$ with 
  \begin{enumerate}
   \item $u$ is normalized by \[
     \int\limits_{\mathbb{R}^n}\abs{\grad u}^2dx =1.
   \]
   \item $ u_k \rightharpoonup u$ in $W^{2,2}(\mathbb{R}^n)$ as $k$ tends to $\infty$.
   \item $u_k \longrightarrow u$ in $W^{1,2}(\mathbb{R}^n)$ as $k$ tends to $\infty$.
   \item There holds $ \Ray(u,\mathbb{R}^n) \leq \inf_{\Omega\in\Oc}\L(\Omega)$.
  \end{enumerate}
\end{proposition}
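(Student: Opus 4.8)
The plan is to collect the ingredients established in the preceding discussion into a single argument. First I would apply the concentration--compactness Lemma~\ref{la:ccp} to the minimizing sequence $(\Omega_k)_k$ and its normalized eigenfunctions $(u_k)_k$: after passing to a subsequence, exactly one of compactness, vanishing, or dichotomy occurs. The second step is to exclude the two degenerate alternatives. Vanishing is ruled out by the argument given immediately before the statement: if \eqref{eq:vanishing} held, then for a suitable index $1\le l_k\le n$ the normalized functions $v_k:=c_k^{-1}\partial_{l_k}u_k\in W^{1,2}_0(\Omega_k)$ would satisfy the hypotheses of Lemma~\ref{la:trafolemma} but, after the translation furnished by that lemma, would converge to $0$ in $L^2(B_R(0))$ for every $R>0$, contradicting its conclusion. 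Dichotomy is excluded exactly as in \cite{BucurAshbaugh}. Hence, along a further subsequence, the compactness alternative holds.

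From the compactness alternative one extracts, following \cite{BucurAshbaugh}, a sequence of translations $(y_k)_k\subset\mathbb{R}^n$ and a function $u\in W^{2,2}(\mathbb{R}^n)$ with $u_k(\cdot+y_k)\rightharpoonup u$ in $W^{2,2}(\mathbb{R}^n)$. This is assertion (2) after the relabeling $u_k:=u_k(\cdot+y_k)$, $\Omega_k:=\Omega_k+y_k$, which is harmless since the buckling load and the Rayleigh quotient are translation invariant and $u_k(\cdot+y_k)$ is still a normalized eigenfunction on $\Omega_k+y_k$. Assertion (1), $\int_{\mathbb{R}^n}|\nabla u|^2\,dx=1$, is the content of \eqref{eq:norm_u}: in the compactness case the gradient mass concentrates and none of it escapes to infinity. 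Assertion (3), strong convergence $u_k\to u$ in $W^{1,2}(\mathbb{R}^n)$, is precisely Lemma~\ref{la:strong_W^1,2_convergence}.

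Finally, assertion (4) follows from (2) and (3) as recorded in \eqref{eq:min_R}: weak lower semicontinuity of $v\mapsto\int_{\mathbb{R}^n}|\Delta v|^2\,dx$ under weak $W^{2,2}$-convergence bounds the numerator of the Rayleigh quotient, while strong $L^2$-convergence of the gradients forces $\int_{\mathbb{R}^n}|\nabla u_k|^2\,dx\to\int_{\mathbb{R}^n}|\nabla u|^2\,dx=1$, so that
\[
  \Ray(u,\mathbb{R}^n)\le\liminf_{k\to\infty}\Ray(u_k,\Omega_k)=\liminf_{k\to\infty}\L(\Omega_k)=\inf_{\Omega\in\Oc}\L(\Omega).
\]
The only genuinely substantive point is the exclusion of vanishing and dichotomy; once these are disposed of, the proposition is a matter of assembling \eqref{eq:weak_conv}, \eqref{eq:norm_u}, Lemma~\ref{la:strong_W^1,2_convergence}, and \eqref{eq:min_R}. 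I expect the main bookkeeping obstacle to be keeping track of the two successive passages to a subsequence and using the translation invariance consistently, but this is routine rather than deep.
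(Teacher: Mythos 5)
Your proposal is correct and follows essentially the same route as the paper: the proposition is stated as a summary of the preceding discussion, and you reassemble exactly those pieces — Lemma~\ref{la:ccp}, exclusion of vanishing via Lemma~\ref{la:trafolemma}, exclusion of dichotomy as in \cite{BucurAshbaugh}, the weak $W^{2,2}$-limit with normalization \eqref{eq:norm_u}, Lemma~\ref{la:strong_W^1,2_convergence} for the strong $W^{1,2}$-convergence, and \eqref{eq:min_R} for the Rayleigh quotient bound. Nothing substantive differs from the paper's argument.
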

Recall that in \cite{BucurAshbaugh} only the two dimensional case is considered. Consequently, the limit function $u$ is continuous due to Sobolev's embedding theory. Hence, the set 
\[
  \tilde{\Omega} := \{x \in \mathbb{R}^2: u(x) \neq 0\}
\] 
is an open set and the strong $L^2$-convergence of $u_k$ to $u$ implies that $\tilde{\Omega}\in\Oc$. 

Here, we consider arbitrary dimension. Hence, we need another method to prove regularity of the function $u$. 
Inspired by \cite{AltCaf81}, our approach is based on a careful analysis of the function $u$. This will be done in the next section.

\subsection{Regularity of the limit function}

Our first aim is to show that $u$ has got Hölder continuous first order derivatives. This will be done by using Morrey's Dirichlet Growth Theorem (see Theorem \ref{theo:MDGT}) and a bootstrapping argument based on ideas of Q.~Han and F.~Lin in \cite{HanLin}.

From now on, we consider a minimizing sequence $(\Omega_k)_k \subset \Oc$ such that there holds 
\begin{equation}\label{eq:special_ms}
  \Lc := \inf_{\Omega \in \Oc}\L(\Omega) \leq \L(\Omega_k) \leq \Lc + \frac{1}{k} \mbox{ for every } k \in \mathbb{N}.
\end{equation}
We want to apply the following version of Morrey's Dirichlet Growth Theorem to the first order derivatives of $u$. 

\begin{theorem}\label{theo:MDGT}
 Let $v \in W^{1,2}(\mathbb{R}^n)$ and $0<\alpha\leq 1$ such that for every $x_0 \in \mathbb{R}^n$  and every $0<r\leq r_0$ there holds
 \[
    \int\limits_{B_r(x_0)}\abs{\grad v}^2dx \leq M\cdot r^{n-2+2\alpha}. 
 \] 
 Then $v$ is $\alpha$-Hölder continuous almost everywhere in $\mathbb{R}^n$ and  for almost every $x_1,x_2 \in  \mathbb{R}^n$ there holds
 \[
   \frac{\abs{v(x_1)-v(x_2)}}{\abs{x_1-x_2}^\alpha}\leq C(\alpha)\cdot M.
 \]
\end{theorem}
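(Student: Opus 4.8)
This is Morrey's Dirichlet growth theorem (equivalently, the identification of the Campanato space $\mathcal{L}^{2,\,n-2+2\alpha}$ with the H\"older space $C^{0,\alpha}$), and I would prove it by the classical comparison of averages over balls. For $x_0\in\mathbb{R}^n$ and $0<r<\infty$ write $v_{x_0,r}:=\fint_{B_r(x_0)}v\,dx$. Since the asserted inequality is a \emph{pointwise} statement, the first task is to single out the precise representative of $v$ and to control its oscillation; the three displayed bounds below are the backbone of the argument.

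\textbf{Step 1 (oscillation on one ball).} Poincar\'e's inequality on a ball gives $\fint_{B_r(x_0)}\abs{v-v_{x_0,r}}^2\,dx\le c(n)\,r^2\fint_{B_r(x_0)}\abs{\grad v}^2\,dx$, and feeding in the growth hypothesis $\int_{B_r(x_0)}\abs{\grad v}^2\,dx\le M\,r^{n-2+2\alpha}$ turns this into
\[
  \fint_{B_r(x_0)}\abs{v-v_{x_0,r}}^2\,dx\ \le\ c(n)\,M\,r^{2\alpha}\qquad (0<r\le r_0).
\]
\textbf{Step 2 (precise representative, one-ball bound).} Fix $x_0$, take $r\le r_0$, and apply Step 1 along the dyadic radii $r_j:=2^{-j}r$. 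Using $\fint_{B_{r_{j+1}}(x_0)}\abs{w}^2\,dx\le 2^n\fint_{B_{r_j}(x_0)}\abs{w}^2\,dx$ together with the elementary inequality $\abs{v_{x_0,r_{j+1}}-v_{x_0,r_j}}^2\le 2\fint_{B_{r_{j+1}}(x_0)}\big(\abs{v-v_{x_0,r_{j+1}}}^2+\abs{v-v_{x_0,r_j}}^2\big)dx$ and Step 1, one obtains $\abs{v_{x_0,r_{j+1}}-v_{x_0,r_j}}\le c(n)\sqrt{M}\,r_j^{\alpha}$. Because $\alpha>0$ the geometric series $\sum_j r_j^{\alpha}$ converges, so $(v_{x_0,r_j})_j$ is Cauchy; denote its limit $v^{\ast}(x_0)$. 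Summing the series yields
\[
  \abs{v_{x_0,r}-v^{\ast}(x_0)}\ \le\ C(n,\alpha)\sqrt{M}\,r^{\alpha}\qquad (0<r\le r_0),
\]
and by the Lebesgue differentiation theorem $v^{\ast}(x_0)=v(x_0)$ at every Lebesgue point, hence almost everywhere; $v^{\ast}$ is the representative we work with.

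\textbf{Step 3 (two-point bound).} Let $x_1,x_2$ with $d:=\abs{x_1-x_2}\le r_0/2$ and set $r:=2d\le r_0$. Then $B_d(x_1)\subset B_r(x_1)\cap B_r(x_2)$, so
\[
  \abs{v_{x_1,r}-v_{x_2,r}}^2\ \le\ 2\fint_{B_d(x_1)}\big(\abs{v-v_{x_1,r}}^2+\abs{v-v_{x_2,r}}^2\big)dx\ \le\ 2^{n+1}\sum_{i=1,2}\fint_{B_r(x_i)}\abs{v-v_{x_i,r}}^2\,dx\ \le\ c(n)\,M\,r^{2\alpha},
\]
the last inequality by Step 1. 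Combining with the one-ball bound of Step 2 and the triangle inequality $\abs{v^{\ast}(x_1)-v^{\ast}(x_2)}\le\abs{v^{\ast}(x_1)-v_{x_1,r}}+\abs{v_{x_1,r}-v_{x_2,r}}+\abs{v_{x_2,r}-v^{\ast}(x_2)}$ gives $\abs{v^{\ast}(x_1)-v^{\ast}(x_2)}\le C(n,\alpha)\sqrt{M}\,\abs{x_1-x_2}^{\alpha}$. For $\abs{x_1-x_2}>r_0/2$ one chains this estimate along finitely many intermediate points at mutual distance $\le r_0/2$, which gives the same H\"older inequality with a constant that in addition depends on $r_0$; for the present application only the local estimate on scales $\le r_0$ is needed.

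\textbf{Main difficulty.} There is no genuine obstruction here — this is a textbook lemma — but the closest thing to one is making the \emph{pointwise} H\"older statement meaningful: the hypothesis only controls averaged quantities, so one must first construct the good representative $v^{\ast}$ and check that it agrees with $v$ a.e., which is exactly the role of the dyadic Cauchy argument and the Lebesgue points in Step 2. The rest is bookkeeping of constants; the quantity emerging from the argument is of the form $C(n,\alpha)\,M^{1/2}$ on scales $\le r_0$, matching the conclusion stated above up to the explicit shape of the constant.
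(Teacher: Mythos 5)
The paper does not actually prove Theorem~\ref{theo:MDGT}: it simply cites \cite[Theorem~3.5.2]{morrey} and moves on. So there is no in-paper argument to compare against; your proof supplies a self-contained one. What you give is the standard Campanato-space proof (Poincar\'e oscillation bound, dyadic telescoping to build a precise representative and bound $\abs{v_{x_0,r}-v^{\ast}(x_0)}$, then a two-ball comparison for the H\"older estimate), and it is correct. It is worth noting that your argument produces the dimensionally correct bound $C(n,\alpha)\sqrt{M}\,\abs{x_1-x_2}^\alpha$, whereas the paper's displayed conclusion reads $C(\alpha)\cdot M$; the latter is off both in the power of $M$ (scaling $v\mapsto\lambda v$ scales $M$ by $\lambda^2$ but the H\"older seminorm by $\lambda$, so $M^{1/2}$ is right) and in the implicit suppression of the dimension dependence. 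One small overclaim in your Step~3: chaining across intermediate points at mutual distance $\le r_0/2$ yields, for $\abs{x_1-x_2}\gg r_0$, a Lipschitz-type bound $\lesssim\sqrt{M}\,r_0^{\alpha-1}\abs{x_1-x_2}$ rather than a uniform $\alpha$-H\"older bound with an $r_0$-dependent constant, since $\abs{x_1-x_2}/\abs{x_1-x_2}^{\alpha}$ is unbounded; but, as you already remark, only the local estimate on scales $\le r_0$ is used in the paper, so this does not affect the application.
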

For a proof of this theorem we refer to \cite[Theorem 3.5.2]{morrey}, e.g.. Hence, we  need a $L^2$-estimate for the second order derivatives of $u$ in every ball $B_r(x_0) \subset \mathbb{R}^n$. 

The following lemmata are preparatory for the proof of Theorem \ref{theo:hölder}, which is the main theorem of this section. Before we start, note that by scaling there holds
\begin{equation}\label{eq:bound_Lc}
  \Lc \leq \left( \frac{\omega_n}{\omega_0}\right)^\frac{2}{n}\L(B_1) \leq C(n,\omega_0), 
\end{equation}
where $B_1$ denotes the unit ball in $\mathbb{R}^n$. 

\begin{lemma}\label{la:reg_1} 
Let $u \in W^{2,2}(\mathbb{R}^n)$ be the limit function according to Proposition \ref{prop:survey_minimizing_sequence} and $0<R\leq 1$. 
  There exists a constant $C=C(n,\omega_0)>0$ such that for every $x_0 \in \mathbb{R}^n$ there holds
  \[
    \int\limits_{B_R(x_0)}\abs{\lap(u-v_0)}^2dx \leq C(n,\omega_0)\left(R^n + \int\limits_{B_R(x_0)}\abs{\grad u}^2dx\right),
  \]
  where $v_0 \in W^{2,2}(B_R(x_0))$ with $v_0-u\in W^{2,2}_0(B_R(x_0))$ and $\bilap v_0=0$ in $B_R(x_0)$.
\end{lemma}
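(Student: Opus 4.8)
The plan is to compare $u$ against the biharmonic replacement $v_0$ on the ball $B_R(x_0)$, using the fact that $u$ is a weak limit of the eigenfunctions $u_k$, each of which satisfies the buckling equation $\bilap u_k = -\Lc_k \lap u_k$ (with $\Lc_k = \L(\Omega_k)$) in the distributional sense on $\Omega_k$. The key point is that $u-v_0 \in W^{2,2}_0(B_R(x_0))$ is an admissible test function, so I can test the equation satisfied by $u$ (in the limit) against $u-v_0$.

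First I would fix $x_0 \in \mathbb{R}^n$ and $0 < R \le 1$, let $v_0$ be the biharmonic function with $v_0 - u \in W^{2,2}_0(B_R(x_0))$, and set $\varphi := u - v_0 \in W^{2,2}_0(B_R(x_0))$, extended by zero. Since $\bilap v_0 = 0$ in $B_R(x_0)$, integration by parts gives $\int_{B_R(x_0)} \lap v_0 \, \lap \varphi \, dx = 0$, hence
\[
  \int_{B_R(x_0)} \abs{\lap \varphi}^2 \, dx = \int_{B_R(x_0)} \lap u \, \lap \varphi \, dx.
\]
Next I would pass to the limit from the eigenfunction equation. For each $k$, $\varphi$ need not lie in $W^{2,2}_0(\Omega_k)$, so the cleanest route is: the weak $W^{2,2}$-convergence $u_k \rightharpoonup u$ gives $\int \lap u_k \, \lap \varphi \to \int \lap u \, \lap \varphi$, and the strong $W^{1,2}$-convergence (Proposition \ref{prop:survey_minimizing_sequence}(3)) gives $\int \grad u_k . \grad \varphi \to \int \grad u . \grad \varphi$. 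I would like to use that, for $\psi \in W^{2,2}_0(\Omega_k)$, $\int_{\Omega_k} \lap u_k \, \lap \psi \, dx = \Lc_k \int_{\Omega_k} \grad u_k . \grad \psi \, dx$. To connect $\varphi$ to such $\psi$, I would approximate: since $u \in W^{2,2}(\mathbb{R}^n)$ is a weak limit of functions in $W^{2,2}_0(\Omega_k)$, one expects a diagonal/density argument (or a direct weak-limit identity) yielding
\[
  \int_{\mathbb{R}^n} \lap u \, \lap \varphi \, dx = \Lc \int_{\mathbb{R}^n} \grad u . \grad \varphi \, dx
\]
for all $\varphi \in W^{2,2}_0(B_R(x_0))$ — i.e. $u$ weakly solves $\bilap u = -\Lc \lap u$ globally. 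Granting this, combining with the displayed identity above yields
\[
  \int_{B_R(x_0)} \abs{\lap \varphi}^2 \, dx = \Lc \int_{B_R(x_0)} \grad u . \grad \varphi \, dx.
\]

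To finish, I would estimate the right-hand side. By Cauchy–Schwarz and then Poincaré's inequality on $B_R(x_0)$ applied to $\varphi \in W^{2,2}_0(B_R(x_0))$ (whose gradient components lie in $W^{1,2}_0(B_R(x_0))$), one has $\norm{\grad \varphi}_{L^2(B_R(x_0))} \le C R \norm{\lap \varphi}_{L^2(B_R(x_0))}$ (using that on a ball the full Hessian is controlled by the Laplacian for $W^{2,2}_0$ functions, via $\int \abs{D^2\varphi}^2 = \int \abs{\lap\varphi}^2$). Hence
\[
  \int_{B_R(x_0)} \abs{\lap \varphi}^2 \, dx \le \Lc \norm{\grad u}_{L^2(B_R(x_0))} \norm{\grad \varphi}_{L^2(B_R(x_0))} \le C \Lc R \norm{\grad u}_{L^2(B_R(x_0))} \norm{\lap \varphi}_{L^2(B_R(x_0))},
\]
so $\norm{\lap \varphi}_{L^2(B_R(x_0))} \le C\Lc R \norm{\grad u}_{L^2(B_R(x_0))}$, and squaring together with Young's inequality and the bound \eqref{eq:bound_Lc} for $\Lc$ gives
\[
  \int_{B_R(x_0)} \abs{\lap(u-v_0)}^2 \, dx \le C(n,\omega_0) R^2 \int_{B_R(x_0)} \abs{\grad u}^2 \, dx \le C(n,\omega_0)\left( R^n + \int_{B_R(x_0)} \abs{\grad u}^2 \, dx\right),
\]
where the last (wasteful) step just absorbs $R^2 \le 1$ and puts it in the stated form; the extra $R^n$ term provides slack that will matter in the subsequent Dirichlet-growth bootstrap. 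The main obstacle is the middle step: rigorously justifying that $u$ inherits the Euler–Lagrange identity $\int \lap u\,\lap\varphi = \Lc \int \grad u . \grad\varphi$ for test functions $\varphi$ supported in $B_R(x_0)$, since $\varphi \notin W^{2,2}_0(\Omega_k)$ in general — this requires either a careful cutoff/approximation of $\varphi$ by functions in $W^{2,2}_0(\Omega_k)$ (exploiting that $u_k \to u$ and $u_k = \abs{\grad u_k} = 0$ outside $\Omega_k$) or recognizing $u$ directly as a global weak solution of the buckling equation; the remaining estimates are routine elliptic/Poincaré arguments.
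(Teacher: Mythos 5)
Your approach is genuinely different from the paper's, and it contains a gap that I do not think can be closed at this stage of the argument. The paper does not attempt to extract a PDE for $u$; instead it runs a purely variational \emph{competitor} argument. For each $k$ it replaces $u_k$ on $B_R(x_0)$ by its biharmonic extension $v_k$ to obtain $\hat u_k\in W^{2,2}_0(\Omega_k\cup B_R(x_0))$, observes that $\Omega_k\cup B_R(x_0)$ (suitably rescaled by $\mu_k$ if its measure exceeds $\omega_0$) is an admissible domain, and uses the definition of $\Lc$ as an infimum to get $\Lc\le\mu_k^2\,\Ray(\hat u_k,\mathbb{R}^n)$. Rearranging and sending $k\to\infty$ yields the bound, with the $R^n$ term arising precisely from the measure penalty $\mu_k^2\to(1+\abs{B_R}/\omega_0)^{2/n}$. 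In contrast, your plan hinges on the middle identity $\int\lap u\,\lap\varphi=\Lc\int\grad u\cdot\grad\varphi$ for all $\varphi\in W^{2,2}_0(B_R(x_0))$, i.e.\ on $u$ being a global weak solution of the buckling equation on $B_R(x_0)$.

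That identity is not available here, and the obstacle is structural rather than technical. Each $u_k$ satisfies $\int\lap u_k\,\lap\psi=\L(\Omega_k)\int\grad u_k\cdot\grad\psi$ \emph{only} for $\psi\in W^{2,2}_0(\Omega_k)$; extending $u_k$ by zero, the distribution $\bilap u_k+\L(\Omega_k)\lap u_k$ on $\mathbb{R}^n$ is in general a nontrivial measure supported on $\partial\Omega_k$, so it does not vanish against a test function $\varphi\in W^{2,2}_0(B_R(x_0))$ that crosses $\partial\Omega_k$. Passing to the weak limit therefore gives no equation for $u$ on $B_R(x_0)$ — the possible singular boundary contributions survive. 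Moreover, establishing that $u$ solves $\bilap u+\Lc\lap u=0$ anywhere is exactly what one can only do \emph{after} the optimal domain $\Omega^\ast$ has been constructed (the paper derives this Euler--Lagrange equation only at the very end, and only inside $\Omega^\ast$), so invoking it here is circular. A secondary symptom that something is off: your argument, if it worked, would produce the stronger bound $CR^2\int_{B_R}\abs{\grad u}^2$ with no $R^n$ term, whereas the $R^n$ term in the lemma is not slack but the genuine price of enlarging the domain by $B_R$; this discrepancy is a signal that the intended mechanism is the domain variation, not an interior PDE estimate. The fix is to abandon the Euler--Lagrange route and instead compare $\Omega_k$ with the competitor $\Omega_k\cup B_R(x_0)$ as the paper does.
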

\begin{proof}
The proof is done in three steps. 

\textbf{Step 1}. \quad We choose $x_0  \in \mathbb{R}^n$ arbitrary, but fixed. Let $v_k \in W^{2,2}(B_R(x_0))$ with $v_k-u_k  \in W^{2,2}_0(B_R(x_0))$ and $\bilap v_k =0$ in $B_R(x_0)$.
 If $B_R(x_0)\cap\Omega_k =\emptyset$, $u_k$ and $v_k$ vanish in $B_R(x_0)$. Consequently, we obtain 
\begin{equation}\label{eq:reg_1_1}
  \int\limits_{B_R(x_0)}\abs{\lap(u_k-v_k)}^2dx =0. 
\end{equation}

If $B_R(x_0)\cap\Omega_k \neq\emptyset$, we set 
\[
  \hat{u}_k = \begin{cases}
     u_k, &\mbox{ in } \mathbb{R}^n\setminus B_R(x_0) \\
     v_k, &\mbox{ in } B_R(x_0)
  \end{cases}.
\]
Note that $\Omega_k\cup B_R(x_0)$ is an open set and that $\hat{u}_k \in W^{2,2}_0(\Omega_k\cup B_R(x_0))$. Let us first consider the case $\abs{\Omega_k \cup B_R(x_0)} \leq \omega_0$. Hence, $\Omega_k\cup B_R(x_0) \in \Oc$ and  there holds 
\[
  \Lc = \inf_{\Omega\in\Oc}\L(\Omega) \leq \L(\Omega_k\cup B_R(x_0)) \leq \Ray(\hat{u}_k,\mathbb{R}^n)
\]
since $\hat{u}_k \in W^{2,2}_0(\Omega_k\cup B_R(x_0))$.
Rearranging terms and applying the definition of $\hat{u}_k$ yields 
\begin{align}\label{eq:reg_1_1b}
  \Lc\left(1-\int\limits_{B_R(x_0)}\abs{\grad u_k}^2dx\right) \leq \L(\Omega_k) - \int\limits_{B_R(x_0)}\abs{\lap u_k}^2 - \abs{\lap v_k}^2dx.
\end{align}
Since $v_k - u_k \in W^{2,2}_0(B_R(x_0))$ and $v_k$ is biharmonic in $B_R(x_0)$, there holds 
\[
  \int\limits_{B_R(x_0)}\abs{\lap u_k}^2 - \abs{\lap v_k}^2dx = \int\limits_{B_R(x_0)}\abs{D^2(u_k-v_k)}^2dx. 
\]
We rearrange terms in \eqref{eq:reg_1_1b} and obtain
\begin{equation}\label{eq:reg_1_2}
  \int\limits_{B_R(x_0)}\abs{D^2(u_k-v_k)}^2dx \leq \L(\Omega_k) - \Lc + \Lc\int\limits_{B_R(x_0)}\abs{\grad u_k}^2dx.
\end{equation}

Let us now assume that $\abs{\Omega_k\cup B_R(x_0)}> \omega_0$. Then we set
\begin{equation}\label{eq:mu_k}
  \mu_k := \left(\frac{\abs{\Omega_k}+\abs{B_R}}{\abs{\Omega_k}}\right)^\frac{1}{n}.
\end{equation}
and find that $\mu_k^{-1}\cdot(\Omega_k\cup B_R(x_0)) \in \Oc$. 
Recall that for every $M \subset \mathbb{R}^n$ and $t>0$ the buckling load satisfies 
\begin{equation}\label{eq:scaling_prop}
  \L(M) = t^2\L(t M). 
\end{equation}
Hence, we obtain
\[
  \Lc \leq \L(\mu_k^{-1}(\Omega_k\cup B_R(x_0))) = \mu_k^2\,\L(\Omega_k\cup B_R(x_0)) \leq \mu_k^2\,\Ray(\hat{u}_k,\mathbb{R}^n). 
\]
and, subsequently, 
\begin{equation}\label{eq:reg_1_3}
\mu_k^2\int\limits_{B_R(x_0)}\abs{D^2(u_k-v_k)}^2dx \leq \mu_k^2\,\L(\Omega_k) -\Lc + \Lc\int\limits_{B_R(x_0)}\abs{\grad u_k}^2dx.
\end{equation}
Since $\mu_k>1$, we can collect the estimates \eqref{eq:reg_1_1}, \eqref{eq:reg_1_2} and \eqref{eq:reg_1_3} in the following way: for every $k \in \mathbb{N}$ there holds 
\begin{equation}\label{eq:reg_1_step1}
   \int\limits_{B_R(x_0)}\abs{D^2(u_k-v_k)}^2dx \leq \mu_k^2\,\L(\Omega_k) - \Lc + \Lc\int\limits_{B_R(x_0)}\abs{\grad u_k}^2dx.
\end{equation}

\textbf{Step 2.} We want to understand the limit as $k$ tends to $\infty$ on both sides of \eqref{eq:reg_1_step1}. This needs some preparation. First, recall that we choose a minimizing sequence $(\Omega_k)_k$ such that \eqref{eq:special_ms} holds. Then applying \eqref{eq:scaling_prop} yields
\begin{align*}
   \Lc &\leq \L\left(\left(\frac{\omega_0}{\abs{\Omega_k}}\right)^\frac{1}{n}\Omega_k\right) = \left(\frac{\abs{\Omega_k}}{\omega_0}\right)^\frac{2}{n}\L(\Omega_k)  
    \stackrel{\eqref{eq:special_ms}}{\leq} \left(\frac{\abs{\Omega_k}}{\omega_0}\right)^\frac{2}{n}\left(\Lc+ \frac{1}{k}\right).
\end{align*} 
Rearranging terms yields 
\[
  0 \leq \Lc \left(1-\left(\frac{\abs{\Omega_k}}{\omega_0}\right)^\frac{2}{n}\right) \leq \left(\frac{\abs{\Omega_k}}{\omega_0}\right)^\frac{2}{n}\frac{1}{k}\leq \frac{1}{k}.
\]
Thus, there holds $\abs{\Omega_k}\to \omega_0$ as $k$ tends to $\infty$. This immediately implies that 
\[
  \mu_k \stackrel{k\to\infty}{\longrightarrow} \left(1+\frac{\abs{B_R}}{\omega_0}\right)^\frac{1}{n},
\]
where $\mu_k$ is given in \eqref{eq:mu_k}.
In addition, recall that $u_k \rightharpoonup u$ in $W^{2,2}(\mathbb{R}^n)$ and, therefore, $u_k \rightharpoonup u$ in $W^{2,2}(B_R(x_0))$. Since for all $k \in \mathbb{N}$ there holds 
\[
   \norm{u_k-v_k}^2_{W^{2,2}(B_R(x_0))} \leq 4\int\limits_{B_R(x_0)}\abs{D^2u_k}^2dx \leq 4\norm{u_k}^2_{W^{2,2}(\mathbb{R}^n)} \leq C
\]
and 
\[
  \norm{v_k}_{W^{2,2}(B_R(x_0))} \leq \norm{u_k-v_k}_{W^{2,2}(B_R(x_0))} + \norm{u_k}_{W^{2,2}(B_R(x_0))} \leq C,
\]
there exists a $v_0 \in W^{2,2}(B_R(x_0))$ such that 
\[
   v_k \rightharpoonup v_0 \mbox{ in } W^{2,2}(B_R(x_0)) \mbox{ and }  v_0-u \in W^{2,2}_0(B_R(x_0)). 
\]
Moreover, for every $\phi \in C^\infty_c(B_R(x_0))$ there holds 
\[
 0 = \lim_{k\to\infty}\int\limits_{B_R(x_0)}\lap v_k\lap\phi\,dx = \int\limits_{B_R(x_0)}\lap v_0 \lap\phi\,dx
\]
because $v_k$ is biharmonic in $B_R(x_0)$ for every $k\in\mathbb{N}$ and the weak convergence of $v_k$ to $v_0$ in $W^{2,2}(B_R(x_0))$. Hence, $v_0$ is biharmonic in $B_R(x_0)$. 

\textbf{Step 3.} We take the $\liminf$ on both sides of \eqref{eq:reg_1_step1}. Since $u_k \rightharpoonup u$ in $W^{2,2}_0(B_r(x_0))$, this leads to 
\begin{align*}
  \int\limits_{B_R(x_0)}\abs{D^2(u-v_0)}^2dx &\leq \liminf_{k\to \infty}\int\limits_{B_R(x_0)}\abs{D^2(u_k-v_k)}^2dx  \\
  &\leq \liminf_{k\to \infty}\left(\mu_k^2\,\L(\Omega_k) - \Lc + \Lc\int\limits_{B_R(x_0)}\abs{\grad u_k}^2dx\right) \\
  &= \left(1+\frac{\abs{B_R}}{\omega_0}\right)^\frac{2}{n} \Lc-\Lc+\Lc\int\limits_{B_R(x_0)}\abs{\grad u}^2dx \\
  &\leq C(n,\omega_0)\left(R^n + \int\limits_{B_R(x_0)}\abs{\grad u}^2dx\right).
\end{align*} 
This proves the claim.
\end{proof}
Now let $v_0 \in W^{2,2}(B_R(x_0))$ be the function from Lemma \ref{la:reg_1} and $0<r\leq R$. Then there obviously holds 
\begin{equation}
  \int\limits_{B_r(x_0)}\abs{D^2u}^2dx \leq 2 \int\limits_{B_r(x_0)}\abs{D^2v_0}^2dx + 2 \int\limits_{B_R(x_0)}\abs{D^2(u-v_0)}^2dx
\end{equation}
and applying Lemma \ref{la:reg_1} yields
\begin{equation}\label{eq:reg1}
   \int\limits_{B_r(x_0)}\abs{D^2u}^2dx \leq 2 \int\limits_{B_r(x_0)}\abs{D^2v_0}^2dx +C(n,\omega_0)\left(R^n + \int\limits_{B_R(x_0)}\abs{\grad u}^2dx\right).
\end{equation}
In order to estimate the first summand on the right hand side of the above inequality we cite Lemma 2.1 from \cite{Sto2015}.
\begin{lemma}\label{la:biharm_exten_est}
  Using the notation above there exists a constant $C=C(n)>0$ such that for $0< r\leq R$ there holds 
  \[
    \int\limits_{B_r(x_0)}\abs{D^2v_0}^2 \leq C(n)\left(\frac{r}{R}\right)^n\int\limits_{B_R(x_0)}\abs{D^2u}^2dx.
  \]
  The constant $C$ does not depend on $r, R$ or $x_0$, but on the dimension $n$. 
\end{lemma}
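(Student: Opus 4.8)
The plan is to combine two independent ingredients: an energy comparison between $v_0$ and $u$ on the whole ball $B_R(x_0)$, and an interior decay estimate for the biharmonic function $v_0$ on the concentric balls $B_r(x_0)$, $0<r\le R$.

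First I would show that $\int_{B_R(x_0)}\abs{D^2v_0}^2dx \le \int_{B_R(x_0)}\abs{D^2u}^2dx$. Setting $\phi := u-v_0 \in W^{2,2}_0(B_R(x_0))$ and expanding the square gives
\[
  \int\limits_{B_R(x_0)}\abs{D^2u}^2dx = \int\limits_{B_R(x_0)}\abs{D^2v_0}^2dx + 2\int\limits_{B_R(x_0)}\sum_{i,j=1}^n\partial_{ij}v_0\,\partial_{ij}\phi\,dx + \int\limits_{B_R(x_0)}\abs{D^2\phi}^2dx.
\]
Since $v_0$ is biharmonic, hence smooth in $B_R(x_0)$, and $\phi\in W^{2,2}_0(B_R(x_0))$, two integrations by parts carry all derivatives onto $v_0$ without boundary contributions, so the mixed term equals $\int_{B_R(x_0)}(\bilap v_0)\,\phi\,dx = 0$. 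As the remaining two terms on the right-hand side are nonnegative, the comparison follows; this is essentially the identity already used in the proof of Lemma \ref{la:reg_1}.

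Next I would prove the decay estimate $\int_{B_r(x_0)}\abs{D^2v_0}^2dx \le C(n)(r/R)^n\int_{B_R(x_0)}\abs{D^2v_0}^2dx$. The crucial observation is that differentiating $\bilap v_0 = 0$ shows that each second-order partial derivative $\partial_{ij}v_0$ is again biharmonic in $B_R(x_0)$. Classical interior estimates for biharmonic functions --- obtained e.g. by a Caccioppoli iteration, or from the Almansi decomposition $\partial_{ij}v_0 = h_0 + \abs{x-x_0}^2 h_1$ with $h_0,h_1$ harmonic, together with the mean value property of harmonic functions --- yield the scale-invariant bound
\[
  \sup_{B_{R/2}(x_0)}\abs{\partial_{ij}v_0}^2 \le \frac{C(n)}{R^n}\int\limits_{B_R(x_0)}\abs{\partial_{ij}v_0}^2dx,
\]
so that $\sup_{B_{R/2}(x_0)}\abs{D^2v_0}^2 \le C(n)R^{-n}\int_{B_R(x_0)}\abs{D^2v_0}^2dx$. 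For $0<r\le R/2$ one then estimates $\int_{B_r(x_0)}\abs{D^2v_0}^2 \le \omega_n r^n\sup_{B_{R/2}(x_0)}\abs{D^2v_0}^2 \le C(n)(r/R)^n\int_{B_R(x_0)}\abs{D^2v_0}^2$, while for $R/2<r\le R$ the inequality is trivial with constant $2^n$. Chaining the decay estimate with the energy comparison of the previous step gives the assertion, with a constant depending only on $n$, since the biharmonic equation is invariant under translation and dilation.

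The step requiring the most care is the interior decay estimate for $v_0$ --- specifically, securing the $L^\infty$--$L^2$ bound for biharmonic functions with precisely the homogeneity $R^{-n}$; the rest is elementary once one notes that $\partial_{ij}v_0$ inherits biharmonicity from $v_0$. (This, presumably, is why the statement is quoted from \cite{Sto2015} rather than reproved here.)
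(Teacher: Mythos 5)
Your proof is correct. The paper itself does not prove this lemma---it quotes it verbatim from \cite[Lemma 2.1]{Sto2015}---so there is no in-text argument to compare against, but the route you take is the natural one and is almost certainly how the cited result is established.

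Two minor remarks on presentation, neither of which is a gap. First, the orthogonality $\int_{B_R(x_0)}\sum_{i,j}\partial_{ij}v_0\,\partial_{ij}\phi\,dx=0$ is cleanest to justify by approximating $\phi\in W^{2,2}_0(B_R(x_0))$ in the $W^{2,2}$ norm by $C^\infty_c(B_R(x_0))$ functions and using that $v_0$ is $C^\infty$ in the interior by hypoellipticity of $\Delta^2$; you should state this explicitly rather than saying ``two integrations by parts carry all derivatives onto $v_0$,'' since $\phi$ is a priori not smooth. Note also that what the paper actually records in the proof of Lemma \ref{la:reg_1} is the identity $\int|\lap u_k|^2-|\lap v_k|^2 = \int|D^2(u_k-v_k)|^2$, which is the $\Delta$-form of the same orthogonality; for $\phi\in W^{2,2}_0$ the $\Delta$-form and the $D^2$-form you use are equivalent because $\int|\lap\phi|^2=\int|D^2\phi|^2$, but they are not literally the same identity, so the phrase ``essentially the identity already used'' is slightly imprecise. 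Second, the interior $L^\infty$--$L^2$ bound $\sup_{B_{R/2}(x_0)}|w|^2\le C(n)R^{-n}\int_{B_R(x_0)}|w|^2$ for biharmonic $w$, applied to $w=\partial_{ij}v_0$, is indeed the only substantive analytic input; it follows from standard interior estimates for constant-coefficient elliptic operators (Caccioppoli iteration plus Sobolev, or Almansi plus the mean-value property for harmonic functions, as you indicate), and the $R^{-n}$ homogeneity is forced by scaling, exactly as you note. The case split at $r=R/2$ and the trivial bound with constant $2^n$ on $R/2<r\le R$ are fine. Chaining the decay estimate with the energy comparison gives the claim with a constant depending only on $n$.
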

Thus, \eqref{eq:reg1} becomes
\begin{equation}\label{eq:reg2}\begin{split}
  \int\limits_{B_r(x_0)}\abs{D^2u}^2dx \leq C(n)\left(\frac{r}{R}\right)^n&\int\limits_{B_R(x_0)}\abs{D^2u}^2dx\\ &+C(n,\omega_0)\left(R^n + \int\limits_{B_R(x_0)}\abs{\grad u}^2dx\right).
\end{split}\end{equation}
This estimate will be the starting point for the bootstrapping argument which will lead to the Hölder-continuity of the first order derivatives of $u$. 

From \cite[Chapter III, Lemma 2.1]{giaq_mult_int} we cite the next lemma. 
\begin{lemma}\label{la:tech_morrey}
 Let $\Phi$ be a nonnegative and nondecreasing function on $[0,R]$. Suppose that there exist positive constants $\gamma, \alpha, \kappa, \beta$, $\beta<\alpha$, such that for all $0\leq r\leq R \leq R_0$ 
\[
  \Phi(r) \leq \gamma\left[ \left(\frac{r}{R}\right)^\alpha + \delta \right]\Phi(R)+\kappa R^\beta.
\]
Then there exist positive constants $\delta_0 =\delta_0(\gamma,\alpha,\beta)$ and $C=C(\gamma, \alpha,\beta)$ such that if $\delta < \delta_0$, for all $0\leq r\leq R\leq R_0$ we have
\[
  \Phi(r) \leq C \left(\frac{r}{R}\right)^\beta \left[\Phi(R) + \kappa R^\beta\right]  .
\] 
\end{lemma}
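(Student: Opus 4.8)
The plan is to run the standard iteration argument for inequalities of this mixed type: the hypothesis forces geometric decay of $\Phi$ along the geometric sequence of radii $\tau^{k}R$, and the general case then follows from monotonicity of $\Phi$.

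First I would fix an auxiliary exponent $\gamma_1$ with $\beta<\gamma_1<\alpha$ and then choose $\tau\in(0,1)$ so small that $2\gamma\,\tau^{\alpha}\le\tau^{\gamma_1}$; this is possible precisely because $\alpha-\gamma_1>0$. Setting $\delta_0:=\tau^{\alpha}$, for every $\delta<\delta_0$ the hypothesis applied with the admissible pair $(\tau\rho,\rho)$ for any $0<\rho\le R_0$ gives
\[
  \Phi(\tau\rho)\ \le\ \gamma\bigl[\tau^{\alpha}+\delta\bigr]\,\Phi(\rho)+\kappa \rho^{\beta}\ \le\ 2\gamma\,\tau^{\alpha}\,\Phi(\rho)+\kappa \rho^{\beta}\ \le\ \tau^{\gamma_1}\,\Phi(\rho)+\kappa \rho^{\beta}.
\]
Fix $R\le R_0$. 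Since $\tau^{k}R\le R\le R_0$ for every $k\ge0$, the displayed inequality may be applied at radius $\rho=\tau^{k}R$: with $a_{k}:=\Phi(\tau^{k}R)$ one obtains $a_{k+1}\le\tau^{\gamma_1}a_{k}+\kappa\,\tau^{k\beta}R^{\beta}$, and unrolling this recursion yields
\[
  a_{k}\ \le\ \tau^{k\gamma_1}\,a_{0}\ +\ \kappa R^{\beta}\sum_{j=0}^{k-1}\tau^{\gamma_1(k-1-j)}\,\tau^{\beta j}.
\]
Here is the only place where the strict inequality $\beta<\gamma_1$ enters: factoring out $\tau^{\gamma_1(k-1)}$ turns the sum into a geometric series of ratio $\tau^{\beta-\gamma_1}>1$, hence it is bounded by $C(\gamma,\alpha,\beta)\,\tau^{\beta k}$; combining this with $\tau^{k\gamma_1}\le\tau^{k\beta}$ (valid since $\gamma_1\ge\beta$ and $\tau<1$) gives
\[
  \Phi(\tau^{k}R)\ \le\ C(\gamma,\alpha,\beta)\,\tau^{k\beta}\bigl(\Phi(R)+\kappa R^{\beta}\bigr)\qquad\text{for all }k\ge0.
\]

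Finally, for arbitrary $0<r\le R$ I would choose the unique integer $k\ge0$ with $\tau^{k+1}R<r\le\tau^{k}R$ and use that $\Phi$ is nondecreasing to get $\Phi(r)\le\Phi(\tau^{k}R)$; since $\tau^{k}<\tau^{-1}\,r/R$, this yields $\Phi(r)\le C\tau^{-\beta}(r/R)^{\beta}\bigl(\Phi(R)+\kappa R^{\beta}\bigr)$, which is the assertion with the constant $C\tau^{-\beta}=C(\gamma,\alpha,\beta)$; the case $r=0$ is immediate, since the bound above forces $\Phi(\tau^{k}R)\to0$, whence $\Phi(0)=0$ by monotonicity. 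I do not expect a genuine obstacle here — the proof is purely a calculus iteration — but the one point that must be executed in the right order is the selection of the constants: first the intermediate exponent $\gamma_1\in(\beta,\alpha)$, then $\tau$ small enough to absorb the factor $\gamma$ into a contraction of rate $\tau^{\gamma_1}$, and only then $\delta_0$ small enough that the perturbation $\gamma\delta\,\Phi(R)$ does not spoil that contraction; interposing $\gamma_1$ strictly between $\beta$ and $\alpha$ is exactly what makes the resulting error series summable and produces the $(r/R)^{\beta}$ decay.
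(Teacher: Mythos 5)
Your iteration argument is correct and is precisely the standard proof of this lemma; the paper itself does not reproduce a proof but simply cites \cite[Ch.~III, Lemma~2.1]{giaq_mult_int}, and your argument is essentially the one found in that reference (fix an intermediate exponent $\gamma_1\in(\beta,\alpha)$, choose $\tau$ small enough that $2\gamma\tau^{\alpha}\le\tau^{\gamma_1}$, take $\delta_0=\tau^{\alpha}$, iterate along $\tau^{k}R$, sum the geometric error series, and interpolate by monotonicity).
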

The following lemma is based on ideas of \cite[Chapter 3]{HanLin}. It will be the crucial observation for the bootstrapping. 
\begin{lemma}\label{la:bootstrap1}
 Suppose that for each $0\leq r\leq 1$ there holds 
 \[
  \int\limits_{B_r(x_0)} \abs{D^2u}^2dx \leq M\,r^\mu,
 \]
where  $M>0$ and $\mu \in [0,n)$. 
Then there exists a constant $C(n)>0$ such that for each $0\leq r\leq 1$
\[
 \int\limits_{B_r(x_0)}\abs{\grad u}^2dx \leq C(n,M)\,r^\lambda,
\]
where $\lambda = \mu +2$ if $\mu < n-2$ and $\lambda$ is arbitrary in $(0,n)$ if $n-2\leq \mu < n$.
\end{lemma}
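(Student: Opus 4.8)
\textbf{Proof plan.}

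The plan is to fix $x_0\in\mathbb{R}^n$, set $\Phi(r):=\int_{B_r(x_0)}\abs{\grad u}^2dx$, derive a Morrey-type iteration inequality for $\Phi$, and then invoke Lemma \ref{la:tech_morrey}. The extra two powers of $r$ come from comparing $u$ on each ball with its harmonic replacement and then applying Poincaré's inequality.

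Concretely, for $0<\rho\le 1$ let $v\in W^{1,2}(B_\rho(x_0))$ be the (unique) function with $v-u\in W^{1,2}_0(B_\rho(x_0))$ and $\lap v=0$ in $B_\rho(x_0)$; this exists since $u\in W^{2,2}(\mathbb{R}^n)\subset W^{1,2}(\mathbb{R}^n)$. Testing the equation with $u-v\in W^{1,2}_0(B_\rho(x_0))$, using that $u\in W^{2,2}$ so $\lap u\in L^2(B_\rho(x_0))$, and then Poincaré's inequality on $B_\rho(x_0)$, I would obtain
\[
  \int\limits_{B_\rho(x_0)}\abs{\grad(u-v)}^2dx\le C(n)\,\rho^2\int\limits_{B_\rho(x_0)}\abs{\lap u}^2dx\le C(n)\,\rho^2\int\limits_{B_\rho(x_0)}\abs{D^2u}^2dx\le C(n)\,M\,\rho^{\mu+2},
\]
the last step being the hypothesis. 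Since each component of $\grad v$ is harmonic, $\abs{\grad v}^2$ is subharmonic, so $r\mapsto\abs{B_r(x_0)}^{-1}\int_{B_r(x_0)}\abs{\grad v}^2dx$ is nondecreasing and hence $\int_{B_r(x_0)}\abs{\grad v}^2dx\le(r/\rho)^n\int_{B_\rho(x_0)}\abs{\grad v}^2dx$ for $0<r\le\rho$. Writing $\grad u=\grad v+\grad(u-v)$ on $B_r(x_0)$ and on $B_\rho(x_0)$, using $(r/\rho)^n\le 1$, and combining the two estimates gives
\[
  \Phi(r)\le C(n)\left(\frac{r}{\rho}\right)^n\Phi(\rho)+C(n)\,M\,\rho^{\mu+2}\qquad\text{for all }0<r\le\rho\le 1.
\]

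Now I would apply Lemma \ref{la:tech_morrey} with $\alpha=n$, $\delta=0$ (admissible since $\delta_0>0$) and $R_0=1$. If $\mu<n-2$, then $\beta:=\mu+2<n=\alpha$, the lemma applies directly with $\kappa=C(n)M$, and choosing $R=1$ together with $\Phi(1)\le\int_{\mathbb{R}^n}\abs{\grad u}^2dx=1$ (Proposition \ref{prop:survey_minimizing_sequence}) yields $\Phi(r)\le C(n,M)\,r^{\mu+2}$, i.e.\ $\lambda=\mu+2$. If $n-2\le\mu<n$, then $\mu+2\ge n$; for any prescribed $\lambda\in(0,n)$ one has $\rho^{\mu+2}\le\rho^{\lambda}$ for $\rho\le 1$, so the iteration inequality holds with remainder $C(n)M\rho^{\lambda}$, and Lemma \ref{la:tech_morrey} with $\beta=\lambda<n=\alpha$ and $R=1$ gives $\Phi(r)\le C(n,\lambda,M)\,r^{\lambda}$, which is the claim for arbitrary $\lambda\in(0,n)$.

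The argument is largely routine once Lemma \ref{la:tech_morrey} is on the table; the only points needing care are producing the iteration inequality in exactly the right form (so that the remainder carries the exponent $\mu+2$, which is where the gain of two derivatives enters, via Poincaré after the harmonic comparison) and noting that in the supercritical range $\mu\ge n-2$ the decay cannot be pushed past $r^n$, so only $\lambda<n$ can be extracted. An alternative avoiding Lemma \ref{la:tech_morrey} would be a Campanato-type argument: Poincaré gives $\int_{B_r(x_0)}\abs{\grad u-(\grad u)_{x_0,r}}^2dx\le C(n)M r^{\mu+2}$, and a telescoping estimate of the averages $(\grad u)_{x_0,2^{-j}}$ controls $\abs{(\grad u)_{x_0,r}}$; but the harmonic-replacement route is shorter and fits the lemma just cited.
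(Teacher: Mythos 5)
Your proof is correct, but it takes a genuinely different route to the iteration inequality. The paper obtains
\[
  \int\limits_{B_r(x_0)}\abs{\grad u}^2dx \leq C(n)\left[\left(\frac{r}{s}\right)^n\int\limits_{B_s(x_0)}\abs{\grad u}^2dx + s^2\int\limits_{B_s(x_0)}\abs{D^2u}^2dx\right]
\]
directly by subtracting the mean $(\partial_i u)_{s,x_0}$ from each $\partial_i u$, applying Young and H\"older to the mean term, and the Poincar\'e--Wirtinger inequality (on $B_s$, gaining the factor $s^2$) to the oscillation term; no auxiliary comparison function appears. You instead introduce the harmonic replacement $v$ of $u$ on $B_\rho$, estimate the Dirichlet energy of $u-v$ by $C(n)\rho^2\|\lap u\|_{L^2(B_\rho)}^2$ via the weak formulation plus Poincar\'e, and control $\grad v$ by subharmonicity of $\abs{\grad v}^2$ — a comparison-function argument in the spirit of Caccioppoli/De Giorgi rather than the paper's direct Wirtinger decomposition. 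Both yield the same inequality $\Phi(r)\le C(n)(r/\rho)^n\Phi(\rho)+C(n)M\rho^{\mu+2}$, and from there the use of Lemma \ref{la:tech_morrey} (with $\alpha=n$, $\delta=0$, and $\beta=\mu+2$ resp.\ $\beta=\lambda$ in the two regimes, finally normalizing $\Phi(1)\le 1$) is identical in both proofs. The paper's decomposition is shorter and avoids interior regularity of harmonic functions (Weyl's lemma), whereas your route is arguably more robust: it would adapt without change to settings where an explicit Wirtinger estimate is awkward. Your alternative remark at the end — Campanato via telescoping the averages $(\grad u)_{x_0,2^{-j}}$ — is in fact even closer in spirit to the paper's mean-subtraction than to the route you actually carried out.
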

\begin{proof}
 Let $0\leq r\leq s \leq 1$. For a function $w \in W^{1,2}(\mathbb{R}^n)$ we set 
 \[
  (w)_{r,x_0} := \fint\limits_{B_r(x_0)}w\,dx = \frac{1}{\abs{B_r(x_0)}}\int\limits_{B_r(x_0)}w\,dx.
 \]
Using this notation we write
\[
\int\limits_{B_r(x_0)}\abs{\grad u}^2dx = \sum_{i=1}^n \int\limits_{B_r(x_0)}\abs{\partial_i u - (\partial_i u)_{s,x_0} + (\partial_i u)_{s,x_0}}^2dx.
\]
Then Young's inequality implies
\begin{align*}
 \int\limits_{B_r(x_0)}\abs{\grad u}^2dx &\leq 2 \sum_{i=1}^n \left( \int\limits_{B_r(x_0)}(\partial_i u)^2_{s,x_0}dx + \int\limits_{B_r(x_0)}\abs{\partial_i u - (\partial_i u)_{s,x_0}}^2dx \right) \\
 &\leq2 \sum_{i=1}^n \left(\abs{B_r} \left(\fint\limits_{B_s(x_0)}\partial_i u\,dx \right)^2 + \int\limits_{B_s(x_0)}\abs{\partial_i u - (\partial_i u)_{s,x_0}}^2dx \right).
\end{align*}
Applying Hölder's and a local version of Poincaré's inequality, we find that
\begin{align*}
 \int\limits_{B_r(x_0)}\abs{\grad  u}^2dx &\leq C(n) \left[ \left(\frac{r}{s}\right)^n \int\limits_{B_s(x_0)}\abs{\grad u}^2dx + s^2\int\limits_{B_s(x_0)}\abs{D^2u}^2dx  \right],
\end{align*}
where the constant $C$ only depends on $n$.
By assumption, we can proceed to
\begin{align*}
 \int\limits_{B_r(x_0)}\abs{\grad u}^2dx &\leq C(n) \left[\left(\frac{r}{s}\right)^n \int\limits_{B_s(x_0)}\abs{\grad u}^2dx +M\, s^{\mu+2} \right].
\end{align*}
Now Lemma \ref{la:tech_morrey} implies that for each $0\leq r\leq s\leq 1$ there holds
\begin{align*}
 \int\limits_{B_r(x_0)}\abs{\grad u}^2dx &\leq C(n)  \left(\frac{r}{s}\right)^\lambda \left[\int\limits_{B_{s}(x_0)}\abs{\grad u}^2dx + M\,s^\lambda  \right].
 \intertext{where $\lambda = \mu +2$ if $\mu< n-2$ and $\lambda$ is arbitrary in $(0,n)$ if $n-2 \leq \mu < n$. Choosing $s=1$, we deduce}
 \int\limits_{B_r(x_0)}\abs{\grad u}^2dx &\leq C(n,M)\,r^\lambda.
\end{align*}
\end{proof}
Now we are able to prove the Hölder continuity of the first order derivatives of $u$. 
\begin{theorem}\label{theo:hölder}
Let $u$ be the limit function $u$ according to Proposition \ref{prop:survey_minimizing_sequence}. 
  The first order derivatives of $u$ according are $\alpha$-Hölder continuous almost everywhere on $\mathbb{R}^n$ for every $\alpha \in (0,1)$.
\end{theorem}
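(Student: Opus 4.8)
The plan is to deduce Theorem~\ref{theo:hölder} from Morrey's Dirichlet Growth Theorem (Theorem~\ref{theo:MDGT}) applied to each partial derivative $v=\partial_i u$, $i=1,\dots,n$. Since $v=\partial_i u\in W^{1,2}(\mathbb{R}^n)$ and $\abs{\grad v}\le\abs{D^2u}$ pointwise, it suffices to establish, uniformly in $x_0\in\mathbb{R}^n$, a decay estimate
\[
  \int\limits_{B_r(x_0)}\abs{D^2u}^2dx \le M\,r^{\,n-2+2\alpha}\qquad(0<r\le 1)
\]
with $\alpha$ as close to $1$ as we wish. Such an estimate will be produced by a finite bootstrap that plays the comparison estimate \eqref{eq:reg2} for $\Phi_{x_0}(r):=\int_{B_r(x_0)}\abs{D^2u}^2dx$ off against Lemma~\ref{la:bootstrap1}, which converts a decay rate for $\Phi_{x_0}$ into an improved decay rate for $\int_{B_r(x_0)}\abs{\grad u}^2dx$.

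For the base case, since $u\in W^{2,2}(\mathbb{R}^n)$ by Proposition~\ref{prop:survey_minimizing_sequence}, the nondecreasing function $\Phi_{x_0}$ obeys $\Phi_{x_0}(r)\le\norm{u}_{W^{2,2}(\mathbb{R}^n)}^2=:M_0$ for all $0<r\le1$ and all $x_0$; that is, $\Phi_{x_0}(r)\le M_0\,r^{\mu}$ with $\mu=0$. For the inductive step, suppose $\Phi_{x_0}(r)\le M\,r^{\mu}$ for some $\mu\in[0,n)$, uniformly in $x_0$. By Lemma~\ref{la:bootstrap1} there is an exponent $\lambda<n$ --- namely $\lambda=\mu+2$ if $\mu<n-2$, and $\lambda$ \emph{any} number in $(0,n)$ if $n-2\le\mu<n$ --- such that $\int_{B_r(x_0)}\abs{\grad u}^2dx\le C(n,M)\,r^{\lambda}$. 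Inserting this bound into \eqref{eq:reg2} and using $R\le1$, so that $R^n\le R^{\lambda}$, gives for $0<r\le R\le1$
\[
  \Phi_{x_0}(r)\le C(n)\left(\frac{r}{R}\right)^{n}\Phi_{x_0}(R)+C(n,\omega_0,M)\,R^{\lambda}.
\]
Now Lemma~\ref{la:tech_morrey}, applied with the exponents $n>\lambda$ and with $\delta=0<\delta_0$, and then evaluated at $R=1$, yields $\Phi_{x_0}(r)\le C\,r^{\lambda}$ for all $0<r\le1$, the constant $C$ being independent of $x_0$; thus the decay exponent for $\Phi_{x_0}$ has been improved from $\mu$ to $\lambda$.

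Starting from $\mu=0$ and iterating: as long as the current exponent is $<n-2$ it increases by $2$ at each step, so after finitely many steps it enters $[n-2,n)$ --- and it never overshoots $n$, since stepping by $2$ from a value below $n-2$ keeps it below $n$, so Lemma~\ref{la:bootstrap1} stays applicable throughout. Once the exponent lies in $[n-2,n)$, Lemma~\ref{la:bootstrap1} lets us choose $\lambda=n-\eps$ for arbitrary $\eps\in(0,2)$, giving $\int_{B_r(x_0)}\abs{D^2u}^2dx\le M\,r^{\,n-\eps}$ for all $0<r\le1$, uniformly in $x_0$. Writing $n-\eps=n-2+2\alpha$ with $\alpha=1-\eps/2\in(0,1)$, Theorem~\ref{theo:MDGT} applied to $v=\partial_i u$ shows that $\partial_i u$ is $\alpha$-Hölder continuous a.e.\ on $\mathbb{R}^n$; letting $\eps\downarrow0$ gives the claim for every $\alpha\in(0,1)$. (For $n=2$ the base exponent $\mu=0$ already equals $n-2$, so the iteration is trivial and one passes directly to the last step.)

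I do not anticipate a single serious obstacle, the substance being contained in Lemmas~\ref{la:reg_1}--\ref{la:bootstrap1}; the points calling for care are keeping every constant uniform in $x_0$ throughout the iteration --- which is automatic, as all the cited estimates are uniform in $x_0$ and each step is normalized at $R=1$ --- and correctly handling the two regimes of Lemma~\ref{la:bootstrap1}, in particular verifying that the exponent reaches $[n-2,n)$ in finitely many steps without overshooting $n$, so that Lemma~\ref{la:bootstrap1} remains applicable at every stage.
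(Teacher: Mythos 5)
Your proposal is correct and follows essentially the same bootstrap argument as the paper: the base decay $\Phi_{x_0}(r)\le\|u\|_{W^{2,2}}^2 r^0$, alternating applications of Lemma~\ref{la:bootstrap1} and the combination of \eqref{eq:reg2} with Lemma~\ref{la:tech_morrey} normalized at $R=1$, iterated until the exponent enters $[n-2,n)$, followed by Theorem~\ref{theo:MDGT} applied componentwise to $\partial_i u$. The only differences are expository: you explicitly record why the exponent cannot overshoot $n$ (so Lemma~\ref{la:bootstrap1} stays applicable) and you spell out the reduction to $v=\partial_i u$ via $|\grad(\partial_i u)|\le|D^2u|$, both of which the paper leaves implicit.
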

\begin{proof}
  Our aim is to show that for every $x_0 \in \mathbb{R}^n$ and every $0<r\leq 1$ there holds 
  \begin{equation}\label{eq:boot1}
    \int\limits_{B_r(x_0)}\abs{D^2u}^2dx \leq C(n,\omega_0)\,r^{n-2+2\alpha}. 
  \end{equation}
  Then Theorem \ref{theo:MDGT} finishes the proof. Let us choose $x_0 \in \mathbb{R}^n$,  $0<r\leq R\leq 1$ and recall estimate \eqref{eq:reg2}:  
  \[
   \int\limits_{B_r(x_0)}\abs{D^2u}^2dx \leq C(n)\left(\frac{r}{R}\right)^n\int\limits_{B_R(x_0)}\abs{D^2u}^2dx +C(n,\omega_0)\left(R^n + \int\limits_{B_R(x_0)}\abs{\grad u}^2dx\right).
  \]
  We will improve this estimate using a bootstrap argument based on Lemma \ref{la:bootstrap1}. Note that for every $0<r\leq 1$ there holds
  \begin{equation}\label{eq:boot2a}
    \int\limits_{B_r(x_0)}\abs{D^2u}^2dx \leq \norm{u}_{W^{2,2}(\mathbb{R}^n)}^2 =  \norm{u}_{W^{2,2}(\mathbb{R}^n)}^2\,r^0.
  \end{equation}
Then Lemma \ref{la:bootstrap1} implies that for every $0<r\leq 1$ there holds
\begin{equation}\label{eq:boot2}
 \int\limits_{B_r(x_0)}\abs{\grad u}^2dx \leq C(n, \norm{u}_{W^{2,2}(\mathbb{R}^n)})\,r^{\lambda_0},
\end{equation}
where $\lambda_0 \in (0,n)$ if $n=2$ and $\lambda_0 = 2$ if $n\geq 3$. We insert this estimate \eqref{eq:reg2}. Since $R\leq 1$ we obtain 
\[
  \int\limits_{B_r(x_0)} \leq C(n)\left(\frac{r}{R}\right)^2\int\limits_{B_R(x_0)}\abs{D^2u}^2dx + C(n,\omega_0, \norm{u}_{W^{2,2}(\mathbb{R}^n)})\,R^{\lambda_0}
\]
for every $0< r\leq R$. Applying Lemma \ref{la:tech_morrey}, we obtain 
\[
  \int\limits_{B_r(x_0)}\abs{D^2u}^2dx \leq C\,\left(\frac{r}{R}\right)^{\lambda_0}\left(\int\limits_{B_R(x_0)}\abs{D^2u}^2dx +  C(n,\omega_0, \norm{u}_{W^{2,2}(\mathbb{R}^n)})\,R^{\lambda_0}\right)
\]
for every $0<r\leq R$. Choosing $R=1$ leads to 
\begin{equation}\label{eq:boot3}
   \int\limits_{B_r(x_0)}\abs{D^2u}^2dx \leq C(n,\omega_0, \norm{u}_{W^{2,2}(\mathbb{R}^n)})r^{\lambda_0}
\end{equation}
for every $0<r\leq 1$. If $n=2$, this is \eqref{eq:boot1}. 

If $n \geq 3$, \eqref{eq:boot3} is an improvement of estimate \eqref{eq:boot2a}. Recall that here holds $\lambda_0=2$. We again apply Lemma \ref{la:bootstrap1} and obtain for every $0<r\leq 1$
\[
  \int\limits_{B_r(x_0)}\abs{D^2u}^2dx \leq C(n,\omega_0, \norm{u}_{W^{2,2}(\mathbb{R}^n)})r^{\lambda_1},
\]
where $\lambda_1  \in (0,n)$ if $n\in\{3,4\}$ and $\lambda_1 = 4$ if $n\geq 5$. Together with estimate \eqref{eq:reg2} we find that
\[
  \int\limits_{B_r(x_0)}\abs{D^2u}^2dx \leq  C(n)\left(\frac{r}{R}\right)^n\int\limits_{B_R(x_0)}\abs{D^2u}^2dx + C(n,\omega_0, \norm{u}_{W^{2,2}(\mathbb{R}^n)}) R^{\lambda_1}
\]
for every $0<r\leq R\leq 1$. Then Lemma \ref{la:tech_morrey} implies
\[
 \int\limits_{B_r(x_0)}\abs{D^2u}^2dx \leq C\,\left(\frac{r}{R}\right)^{\lambda_1}\left(\int\limits_{B_R(x_0)}\abs{D^2u}^2dx +  C(n,\omega_0, \norm{u}_{W^{2,2}(\mathbb{R}^n)})\,R^{\lambda_1}\right)
\]
and choosing $R=1$ there holds 
\begin{equation}\label{eq:boot4}
   \int\limits_{B_r(x_0)}\abs{D^2u}^2dx \leq C(n,\omega_0, \norm{u}_{W^{2,2}(\mathbb{R}^n)})r^{\lambda_1}
\end{equation}
for every $0<r\leq 1$.  For $n \in \{3,4\}$, estimate \eqref{eq:boot4} and Theorem \ref{theo:MDGT} proves the claim. 

If $n\geq 6$, we repeat the argumentation since \eqref{eq:boot4} is an improvement of \eqref{eq:boot3}. Repeating this process proves the claim after finite many steps for every $n\geq 2$. 
\end{proof}

   Due to Theorem \ref{theo:hölder} the limit function $u$ has a unique representative in $W^{2,2}(\mathbb{R}^n)$ which is continuous in $\mathbb{R}^n$ and which has $\alpha$-Hölder continuous first order derivatives in $\mathbb{R}^n$ for every $\alpha \in (0,1)$.  From now on, we rename this representative as $u$ and focus on this function.

\subsection{The minimizing domain}\label{subsec:domain}

The regularity of $u$, which we achieved in the previous section, enables us to construct an optimal domain for minimizing the buckling load in $\Oc$. 
Recall that there holds (see \eqref{eq:min_R})
\[
\Ray(u,\mathbb{R}^n) \leq \inf_{\Omega\in\Oc}\L(\Omega). 
\]
If $u \in W^{2,2}_0(\Omega^\ast)$ for a suitable set  $\Omega^\ast \in \Oc$, this set $\Omega^\ast$ is the desired minimizer. 
Thus, the challenge is to construct a suitable $\Omega^\ast$. 

Let us define 
\begin{align*}
 \tilde{\Omega} := \{  x \in \mathbb{R}^n: u(x)\neq 0 \} \mbox{ and }
  \hat{\Omega} := \{ x \in \mathbb{R}^n: \abs{\grad u(x)}>0\}.
\end{align*}
Since $u$ and $\grad u$ are continuous on $\mathbb{R}^n$, $\tilde{\Omega}$ and $\hat{\Omega}$ are open sets. 
By definition of $\tilde{\Omega}$ and $\hat{\Omega}$, $u$ vanishes outside $\tilde{\Omega}$ and $\grad u$ vanishes outside $\hat{\Omega}$. 
Now let $(\Omega_k)_k\subset \Oc$  be a minimizing sequence and $(u_k)_k \subset W^{2,2}(\mathbb{R}^n)$ the corresponding sequence of eigenfunctions according to Proposition \ref{prop:survey_minimizing_sequence}.
Then the strong $L^2$-convergence from $u_k$ to $u$ implies
\begin{align*}
  \int\limits_{\mathbb{R}^n}(u-u_k)^2dx & = \int\limits_{\Omega_k}(u-u_k)^2dx + \int\limits_{\Omega_k^c}u^2dx \\
  &= \int\limits_{\Omega_k}(u-u_k)^2dx + \int\limits_{\Omega_k^c\cap\tilde{\Omega}}u^2dx \stackrel{k\to\infty}{\longrightarrow} 0.
\end{align*}
Consequently, there holds $\abs{\Omega_k^c\cap\tilde{\Omega}} \stackrel{k\to\infty}{\longrightarrow} 0$ since $u$ cannot vanish in $\Omega_k^c \cap \tilde{\Omega}$. Analogously, the strong $L^2$-convergence of $\grad u_k$ to $\grad u$ yields $\abs{\Omega_k^c \cap \hat{\Omega} } \stackrel{k\to\infty}{\longrightarrow} 0.$ Now we denote
\begin{equation}\label{eq:Omega^ast}
\Omega^\ast := \tilde{\Omega}\cup\hat{\Omega} = \{ x \in \mathbb{R}^n: u(x) \neq 0 \mbox{ or } \abs{\grad u(x)}\neq 0\}. 
\end{equation}
Note that $\Omega^\ast$ is an open set. In addition, we find that for every $k \in \mathbb{N}$ there holds 
\begin{align*}
  \abs{\Omega^\ast} &= \abs{\Omega^\ast \cap \Omega_k} + \abs{\Omega^\ast\cap\Omega_k^c} \\ 
  &\leq \underbrace{\abs{\Omega_k}}_{\leq\omega_0} +  \abs{\tilde{\Omega}\cap\Omega_k^c}+\abs{\hat{\Omega}\cap\Omega_k^c}.
\end{align*}
Thus, letting $k$ tend to infinity, we obtain $\abs{\Omega^\ast}\leq \omega_0$ and there holds $\Omega^\ast \in \Oc$. 
By construction, $u$ and $\grad u$ vanish in every point in $\mathbb{R}^n\setminus \Omega^\ast$. 

The following corollary guarantees that $u \in W^{2,2}_0(\Omega^\ast)$. For the proof of this corollary we refer to \cite[Th. 9.1.3]{AdamsHedberg1996} or \cite[Sec. 3.3.5]{HenrotPierre_book}. 

\begin{corollary}\label{cor:W^{2,2}}
   Let $\Omega \subset \mathbb{R}^n$ be an arbitrary open set and $v \in W^{2,2}(\mathbb{R}^n)$. If $v$ and its first order derivatives vanish pointwise  in $\mathbb{R}^n\setminus \Omega$, then $u \in W^{2,2}_0(\Omega)$. 
\end{corollary}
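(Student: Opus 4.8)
The plan is to deduce Corollary~\ref{cor:W^{2,2}} from the potential–theoretic (capacitary) description of the space $W^{2,2}_0(\Omega)$, which is exactly the content of the cited results \cite{AdamsHedberg1996,HenrotPierre_book}; I would arrange the argument so that the only nontrivial ingredient is that description. First I would pin down the meaning of ``vanishing pointwise'': recall that every $w\in W^{2,2}(\mathbb{R}^n)$ has a quasicontinuous representative with respect to the Bessel capacity $C_{2,2}$, and that each first order derivative $\partial_i w\in W^{1,2}(\mathbb{R}^n)$ has a quasicontinuous representative with respect to $C_{1,2}$; in either case the representative is uniquely determined outside a set of vanishing capacity of the relevant order, and these ``precise representatives'' are what the hypothesis refers to. (In the case we actually need, $v=u$ with $u$ as in Theorem~\ref{theo:hölder}, the functions $u$ and $\grad u$ are honestly continuous, so no such subtlety arises and the precise representatives are the continuous ones.)

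The key step is to invoke the characterization
\[
  w\in W^{2,2}_0(\Omega)\iff w=0\ \text{and}\ \partial_i w=0\ (i=1,\dots,n)\ \text{quasi-everywhere on }\mathbb{R}^n\setminus\Omega,
\]
read with respect to the precise representatives above (the first condition relative to $C_{2,2}$, the second relative to $C_{1,2}$). The implication ``$\Rightarrow$'' is the elementary one: approximate $w$ by $C^\infty_c(\Omega)$ functions, pass to a subsequence converging quasi-everywhere together with its first derivatives, and use countable subadditivity of capacity. The implication ``$\Leftarrow$'', which is the one I need, is the substantial theorem — essentially \cite[Th.~9.1.3]{AdamsHedberg1996}, see also \cite[Sec.~3.3.5]{HenrotPierre_book} — and I would cite it rather than reprove it.

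With this in hand the corollary is immediate. By hypothesis $v$ and each $\partial_i v$ vanish at \emph{every} point of $\mathbb{R}^n\setminus\Omega$ in the sense of the precise representatives, hence a fortiori quasi-everywhere there, so the ``$\Leftarrow$'' implication yields $v\in W^{2,2}_0(\Omega)$. Applying this with $v=u$ and $\Omega=\Omega^\ast=\{u\neq 0\}\cup\{\grad u\neq 0\}$ (an open set with $\abs{\Omega^\ast}\le\omega_0$, as established above), the continuity of $u$ and $\grad u$ shows that $u$ and $\grad u$ vanish at every point of the closed set $\mathbb{R}^n\setminus\Omega^\ast$, whence $u\in W^{2,2}_0(\Omega^\ast)$; combined with $\Ray(u,\mathbb{R}^n)\le\inf_{\Omega\in\Oc}\L(\Omega)$ this exhibits $\Omega^\ast$ as a minimizer of the buckling load in $\Oc$.

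The main obstacle is exactly the ``$\Leftarrow$'' direction of the characterization, and it is a genuine one. The naive alternative of truncating $u$ near its zero set — replacing $u$ by $\phi_\delta(u)$ with $\phi_\delta$ a smooth function flattening $u$ where $\abs{u}\le\sqrt\delta$ — produces a function whose support is a compact subset of $\Omega^\ast$, but it fails to converge to $u$ in $W^{2,2}$ near the nodal set $\{u=0,\ \grad u\neq 0\}$ that may lie inside $\Omega^\ast$: estimating the second order derivatives of such a truncation would require third order derivatives of $u$, which are not available since only $u\in W^{2,2}(\mathbb{R}^n)$ is known. This is precisely why one must pass through capacity/potential theory for that step, and why I would lean on the cited references there.
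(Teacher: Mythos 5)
Your proposal is correct and takes essentially the same route as the paper: the paper offers no proof of its own but refers directly to \cite[Th.\ 9.1.3]{AdamsHedberg1996} and \cite[Sec.\ 3.3.5]{HenrotPierre_book}, which is exactly the capacitary characterization of $W^{2,2}_0(\Omega)$ you invoke, together with the elementary observation that pointwise vanishing of $v$ and $\grad v$ on $\mathbb{R}^n\setminus\Omega$ is a stronger hypothesis than the quasi-everywhere vanishing appearing in that theorem. Your additional remarks on precise representatives and on why a naive truncation argument would fail (for lack of third-order derivatives near the set $\{u=0,\ \grad u\neq 0\}$) are correct clarifications of what the paper leaves implicit.
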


Now we can prove our main theorem. 

\begin{theorem}
  The set $\Omega^\ast$ given by \eqref{eq:Omega^ast} minimizes the buckling load $\L$ in $\Oc$. 
\end{theorem}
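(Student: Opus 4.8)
The plan is to combine the regularity of $u$ established in Theorem~\ref{theo:hölder} with the variational inequality \eqref{eq:min_R}. The key point is that $\Omega^\ast$ in \eqref{eq:Omega^ast} has been chosen precisely so that $u$ becomes an admissible test function for the buckling load on $\Omega^\ast$, and once this is known the statement follows by a short chain of inequalities.

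First I would recall that $\Omega^\ast \in \Oc$, which was verified in the discussion preceding the theorem, and that by the very definition of $\Omega^\ast = \tilde{\Omega}\cup\hat{\Omega}$ the function $u$ together with its first order derivatives vanishes pointwise in $\mathbb{R}^n \setminus \Omega^\ast$. Applying Corollary~\ref{cor:W^{2,2}} with $\Omega = \Omega^\ast$ and $v = u$ then yields $u \in W^{2,2}_0(\Omega^\ast)$, so that $u$ is admissible in the definition of $\L(\Omega^\ast)$.

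Next I would observe that, since $u$ and $\grad u$ vanish outside $\Omega^\ast$, the integrals defining the Rayleigh quotient over $\mathbb{R}^n$ and over $\Omega^\ast$ coincide; in particular, using the normalization $\int_{\mathbb{R}^n}\abs{\grad u}^2dx = 1$ from Proposition~\ref{prop:survey_minimizing_sequence}, the denominator does not vanish and $\Ray(u,\Omega^\ast) = \Ray(u,\mathbb{R}^n)$. Hence, by minimality in the definition of $\L(\Omega^\ast)$ and by \eqref{eq:min_R},
\[
  \L(\Omega^\ast) \leq \Ray(u,\Omega^\ast) = \Ray(u,\mathbb{R}^n) \leq \inf_{\Omega\in\Oc}\L(\Omega) = \Lc .
\]
On the other hand, $\Omega^\ast \in \Oc$ forces $\L(\Omega^\ast) \geq \inf_{\Omega\in\Oc}\L(\Omega) = \Lc$, so both inequalities are equalities and $\Omega^\ast$ attains the infimum, i.e. $\L(\Omega^\ast) = \min_{\Omega\in\Oc}\L(\Omega)$.

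There is no serious obstacle left at this stage: all the substantial work — ruling out vanishing and dichotomy, the strong $W^{1,2}$-convergence $u_k \to u$, and above all the H\"older continuity of $\grad u$ that makes Corollary~\ref{cor:W^{2,2}} applicable — has already been carried out. The only points requiring a little care are checking that the Rayleigh quotient is genuinely unchanged when passing from $\mathbb{R}^n$ to $\Omega^\ast$ (which is immediate from the pointwise vanishing of $u$ and $\grad u$ on the complement) and that $u \not\equiv 0$, which is guaranteed by $\int_{\mathbb{R}^n}\abs{\grad u}^2dx = 1$.
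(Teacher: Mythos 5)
Your proof is correct and follows the same route as the paper: deduce $u\in W^{2,2}_0(\Omega^\ast)$ from Corollary~\ref{cor:W^{2,2}} and the pointwise vanishing of $u$ and $\nabla u$ on $\mathbb{R}^n\setminus\Omega^\ast$, then sandwich $\L(\Omega^\ast)$ between $\Ray(u,\mathbb{R}^n)$ and $\inf_{\Omega\in\Oc}\L(\Omega)$ using \eqref{eq:min_R}. The extra remarks about the Rayleigh quotient being unchanged and the denominator being nonzero are sensible but implicit in the paper's argument.
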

\begin{proof}
 Recall that there holds 
 \[
   \Ray(u,\mathbb{R}^n) \stackrel{\eqref{eq:min_R}}{\leq} \liminf_{k\to\infty}\Ray(u_k,\Omega_k)  =\inf_{\Omega\in\Oc}\L(\Omega).
 \]
 Since $\Omega^\ast \in \Oc$ and $u \in W^{2,2}_0(\Omega^\ast)$ there holds 
 \[
  \inf_{\Omega\in\Oc}\L(\Omega) \leq \L(\Omega^\ast ) \leq \Ray(u,\Omega^\ast) = \Ray(u,\mathbb{R}^n). 
 \]
 Obviously, this means that 
 \[
    \inf_{\Omega\in\Oc}\L(\Omega)= \L(\Omega^\ast) = \Ray(u,\mathbb{R}^n).
 \]
\end{proof}

Due to the scaling property of the buckling load,  the following corollary holds true.
\begin{corollary}\label{cor:volume}
 Let  $\Omega^\ast \in \Oc$ minimize the buckling load $\L$ in $\Oc$. Then $\Omega^\ast$ satisfies $\abs{\tilde{\Omega}}=\omega_0$.
\end{corollary}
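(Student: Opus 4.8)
The plan is to argue by contradiction using the scaling property \eqref{eq:scaling_prop} of the buckling load, exactly the tool already used throughout Section~\ref{sec:existence}. Suppose, towards a contradiction, that $\abs{\tilde\Omega}<\omega_0$, where $\tilde\Omega=\{x\in\mathbb{R}^n: u(x)\neq 0\}$. Since $u$ vanishes (together with $\grad u$, by continuity of $\grad u$ on $\partial\tilde\Omega$, as $\tilde\Omega\supset\hat\Omega$ up to a null set — more precisely $\grad u=0$ a.e.\ on $\tilde\Omega^c$) outside $\tilde\Omega$, Corollary~\ref{cor:W^{2,2}} gives $u\in W^{2,2}_0(\tilde\Omega)$, hence $\Ray(u,\tilde\Omega)=\Ray(u,\mathbb{R}^n)=\Lc$ by the previous theorem. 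Now dilate: set $t:=(\omega_0/\abs{\tilde\Omega})^{1/n}>1$ and consider $t^{-1}\tilde\Omega$, which has measure exactly $\omega_0$, so $t^{-1}\tilde\Omega\in\Oc$. By \eqref{eq:scaling_prop},
\[
  \L(t^{-1}\tilde\Omega) = t^{2}\,\L(\tilde\Omega) = t^{2}\,\Lc.
\]

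Wait — this needs the sign of the inequality to go the right way, and here is the crux: since $t>1$ we would get $\L(t^{-1}\tilde\Omega)=t^2\Lc>\Lc$, which is \emph{not} a contradiction by itself. The correct reading of \eqref{eq:scaling_prop} is $\L(M)=t^2\L(tM)$, i.e.\ enlarging a set decreases its buckling load; so one should instead enlarge $\tilde\Omega$ rather than shrink it. Concretely, since $\abs{\tilde\Omega}<\omega_0$ one can enlarge $\tilde\Omega$ to a set of measure $\omega_0$ and thereby strictly decrease $\L$ below $\Lc$, contradicting the definition of $\Lc$ as the infimum. The cleanest way to make this precise without constructing an explicit enlargement is again pure scaling: let $s:=(\abs{\tilde\Omega}/\omega_0)^{1/n}<1$, so that $s^{-1}\tilde\Omega$ has measure $\omega_0$ and lies in $\Oc$; then \eqref{eq:scaling_prop} with $M=\tilde\Omega$, $t=s^{-1}$ gives $\L(\tilde\Omega)=s^{-2}\L(s^{-1}\tilde\Omega)$, i.e.
\[
  \L(s^{-1}\tilde\Omega) = s^{2}\,\L(\tilde\Omega) = s^{2}\,\Lc < \Lc,
\]
since $s<1$ and $\Lc>0$. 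But $s^{-1}\tilde\Omega\in\Oc$, so $\L(s^{-1}\tilde\Omega)\geq\Lc$ — a contradiction. Hence $\abs{\tilde\Omega}=\omega_0$.

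The two ingredients I would be careful to justify are: first, that $u\in W^{2,2}_0(\tilde\Omega)$ so that $\L(\tilde\Omega)\le\Ray(u,\tilde\Omega)=\Lc$, and in fact equality holds because $\tilde\Omega\in\Oc$ forces $\L(\tilde\Omega)\ge\Lc$; this uses Corollary~\ref{cor:W^{2,2}} together with the fact that $\grad u$ vanishes a.e.\ on the complement of $\tilde\Omega$ (which follows since $\hat\Omega\setminus\tilde\Omega$ has measure zero — on the open set where $u=0$ one has $\grad u=0$ a.e.). Second, that $\Lc>0$, which is clear since $\L(\Omega_k)>0$ for each $k$ and more directly from the lower bound implicit in Poincaré's inequality on sets of bounded measure. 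The main (very minor) obstacle is simply getting the direction of the scaling inequality right: the buckling load scales like an inverse square of length, so the \emph{larger} competitor has the \emph{smaller} $\L$, and it is the assumption $\abs{\tilde\Omega}<\omega_0$ — leaving "room to grow" — that produces the contradiction. There is no serious analytic difficulty here; the statement is essentially a consequence of strict monotonicity of $\L$ under dilation combined with $\tilde\Omega$ already being a minimizer among sets of its own (smaller) measure.
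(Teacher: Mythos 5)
Your argument is the one the paper has in mind: the paper states the corollary without proof, citing only ``the scaling property of the buckling load,'' and you correctly reconstruct this as a contradiction via dilation, including the (initially fumbled but then corrected) direction of the scaling: enlarging the domain by $s^{-1}$ with $s<1$ strictly decreases $\L$, so a minimizer cannot have measure strictly below $\omega_0$.

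One technical point deserves attention. You route the argument through $\tilde\Omega=\{u\neq 0\}$ and invoke Corollary~\ref{cor:W^{2,2}} to conclude $u\in W^{2,2}_0(\tilde\Omega)$. But Corollary~\ref{cor:W^{2,2}} requires $u$ and $\grad u$ to vanish \emph{pointwise} on the complement, whereas what you argue is only that $\grad u=0$ \emph{a.e.}\ on $\tilde\Omega^c$ (which is correct — $\grad u=0$ a.e.\ on $\{u=0\}$ for any $W^{1,1}_{\mathrm{loc}}$ function — but is not the hypothesis of the cited corollary, and indeed $\hat\Omega\not\subset\tilde\Omega$ can genuinely happen at sign-change points of $u$). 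The cleaner route, consistent with the paper's own construction, is to run the scaling argument directly on $\Omega^\ast=\tilde\Omega\cup\hat\Omega$: the paper already established $u\in W^{2,2}_0(\Omega^\ast)$, $\Omega^\ast\in\Oc$ and $\L(\Omega^\ast)=\Lc$, so if $\abs{\Omega^\ast}<\omega_0$ one scales $\Omega^\ast$ up and gets $\L(s^{-1}\Omega^\ast)=s^2\Lc<\Lc$, a contradiction. The statement of the corollary then follows for $\tilde\Omega$ as well, since $\abs{\hat\Omega\setminus\tilde\Omega}=0$ by the same a.e.\ fact, hence $\abs{\tilde\Omega}=\abs{\Omega^\ast}=\omega_0$. (The ``$\tilde\Omega$'' in the corollary is almost certainly a typo for $\Omega^\ast$, as the concluding summary of the paper confirms.) Your verification that $\Lc>0$ via Poincaré is correct and is a step worth stating explicitly, as you did.
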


As a consequence of Corollary \ref{cor:volume}, the set $\Omega^\ast$ is connected.

\begin{corollary}\label{cor:connected}
The set $\Omega^\ast$ given by \eqref{eq:Omega^ast} is connected. 
\end{corollary}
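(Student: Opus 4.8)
The plan is to argue by contradiction, splitting $\Omega^\ast$ into two nonempty relatively open (hence open) pieces carried by disjoint open sets, and then to contradict the minimality of $\L(\Omega^\ast)$ established in the previous theorem. Suppose $\Omega^\ast = U_1 \cup U_2$ with $U_1, U_2$ nonempty, open, and disjoint. Since $u \in W^{2,2}_0(\Omega^\ast)$ and $\grad u = 0$ on $\mathbb{R}^n \setminus \Omega^\ast$ by construction, the functions $u_i := u \cdot \mathbf{1}_{U_i}$ (equivalently, $u$ restricted to $U_i$ and extended by zero) each lie in $W^{2,2}_0(U_i)$: indeed $u_i$ and its gradient vanish pointwise outside $U_i$ (on $U_{3-i}$ because $U_i$ and $U_{3-i}$ are disjoint, and on $\mathbb{R}^n \setminus \Omega^\ast$ because $u$ and $\grad u$ vanish there), so Corollary \ref{cor:W^{2,2}} applies. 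Moreover $\int_{\mathbb{R}^n}\abs{\lap u}^2 dx = \int_{U_1}\abs{\lap u_1}^2dx + \int_{U_2}\abs{\lap u_2}^2 dx$ and likewise for $\abs{\grad u}^2$, with no cross terms since the supports are in disjoint open sets.

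Next I would rule out the degenerate case: if, say, $\grad u \equiv 0$ on $U_1$, then since $u$ is continuous and vanishes on $\partial U_1 \subset \mathbb{R}^n \setminus \Omega^\ast$ (here one uses that $u=0$ on $\mathbb{R}^n\setminus\tilde\Omega$ and a connectedness argument on components of $U_1$), $u$ would be constant, hence zero, on each component of $U_1$; but then $u = 0$ and $\grad u = 0$ on all of $U_1$, forcing $U_1 \cap \Omega^\ast = \emptyset$, contradicting $U_1 \neq \emptyset$. Hence $\int_{U_i}\abs{\grad u_i}^2 dx =: t_i > 0$ for $i = 1, 2$, with $t_1 + t_2 = 1$. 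Now the standard "minimum of a sum of ratios" computation gives
\[
  \Ray(u, \mathbb{R}^n) = \frac{t_1 \Ray(u_1, U_1) + t_2 \Ray(u_2, U_2)}{t_1 + t_2} \geq \min\{\Ray(u_1,U_1), \Ray(u_2,U_2)\} \geq \min\{\L(U_1), \L(U_2)\}.
\]
Say the minimum is $\L(U_1)$. Since $\abs{U_1} \leq \abs{\Omega^\ast} \leq \omega_0$, we have $U_1 \in \Oc$, so $\L(U_1) \geq \Lc = \L(\Omega^\ast) = \Ray(u,\mathbb{R}^n)$. Thus all the inequalities are equalities, which forces $\Ray(u_1, U_1) = \Ray(u_2, U_2) = \Lc$ and, crucially, $\abs{U_1} = \omega_0$ (otherwise scaling $U_1$ down to measure $\omega_0$ would strictly decrease $\L$, by \eqref{eq:scaling_prop} and \eqref{eq:bound_Lc}-type reasoning, contradicting $\L(U_1) = \Lc$). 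But then $\abs{\Omega^\ast} = \abs{U_1} + \abs{U_2} = \omega_0 + \abs{U_2} > \omega_0$ since $U_2 \cap \Omega^\ast \neq \emptyset$ implies $\abs{U_2} > 0$ — contradicting $\Omega^\ast \in \Oc$.

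The main obstacle I anticipate is the degenerate-case bookkeeping in the second paragraph: making precise that $u \in W^{2,2}_0(\Omega^\ast)$ together with continuity of $u$ forces $u$ (not just $\grad u$) to vanish on any component of $U_1$ on which $\grad u$ vanishes, and in particular ensuring that "$U_i$ carries a genuine piece of $\grad u$" so that $t_i > 0$. One must also be careful that $U_1, U_2$ need not be connected themselves, so the decomposition and the scaling step should be phrased at the level of measures and Rayleigh quotients rather than assuming connectedness of the pieces. Once $t_1, t_2 > 0$ is secured, the remainder is the routine sum-of-ratios inequality plus the scaling/strict-monotonicity of $\L$ under dilation, both already available from the scaling identity \eqref{eq:scaling_prop} used repeatedly above; alternatively one can finish directly from Corollary \ref{cor:volume}, which already asserts $\abs{\Omega^\ast}=\omega_0$, by noting that the same argument applied to the minimizer $U_1$ would give $\abs{U_1}=\omega_0$ and hence $\abs{\Omega^\ast}>\omega_0$, the desired contradiction.
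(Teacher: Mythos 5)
Your proof is correct and follows essentially the same strategy as the paper: decompose $\Omega^\ast$ into two disjoint open pieces, restrict $u$ to each to produce admissible test functions, deduce that at least one piece is itself a minimizer, and contradict Corollary~\ref{cor:volume}. The only substantive difference is presentational --- you invoke the sum-of-ratios inequality where the paper performs an equivalent algebraic rearrangement of $\L(\Omega^\ast) \leq \Ray(u_1,\Omega_1)$ --- and your explicit ruling-out of the degenerate case $\int_{U_i}|\nabla u|^2\,dx = 0$ (via continuity of $\nabla u$ and the boundary values of $u$) is a worthwhile precision: the paper tacitly assumes this quantity is positive when it divides by it, and your argument cleanly supplies that justification.
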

\begin{proof}
Let us assume that $\Omega^\ast$ consists of the two connected components  $\Omega_1$ and $\Omega_2$ with $\abs{\Omega_k}>0$ for $k=1,2$. By $u_k$ we denote the eigenfunction $u$ restricted to $\Omega_k$, i.e. 
\[
  u_k := \begin{cases}
    u, &\mbox{ in } \Omega_k\\
    0, &\mbox{otherwise}
  \end{cases}.
\]  
Since $\Omega_k \in \Oc$, the minimality of $\Omega^\ast$ for $\L$ implies
\[
  \L(\Omega^\ast) = \Ray(u,\Omega^\ast)\leq \L(\Omega_1) \leq \Ray(u_1,\Omega_1). 
\]
Rearranging terms and using that $\norm{\grad u}_{L^2(\Omega^\ast)}=1$ we obtain 
\begin{align*}
  \left(  \int\limits_{\Omega_1}\abs{\lap u_1}^2dx + \int\limits_{\Omega_2}\abs{\lap u_2}^2dx\right)\left(1-\int\limits_{\Omega_2}\abs{\grad u_2}^2dx\right) 
  \leq \int\limits_{\Omega_1}\abs{\lap u_1}^2dx.
\end{align*}
Hence, 
\[
  \int\limits_{\Omega_2}\abs{\lap u_2}^2 \leq \L(\Omega^\ast)\,\int\limits_{\Omega_2}\abs{\grad u_2}^2dx \; \Leftrightarrow \; \Ray(u_2,\Omega_2) \leq \L(\Omega^\ast).
\]
Then there holds
\[
  \L(\Omega_2) \leq \Ray(u_2,\Omega_2) \leq \L(\Omega^\ast)
\]
and $\Omega_2$ is a minimizer of $\L$ in $\Oc$. However, since $\abs{\Omega_2} < \omega_0$, this is a contradiction to Corollary \ref{cor:volume}.
\end{proof}

Summing up, we found an optimal domain $\Omega^\ast \in \Oc$ for minimizing the buckling load in $\Oc$. The set $\Omega^\ast$ is open, connected and satisfies $\abs{\Omega^\ast}=\omega_0$. Classical variational arguments show that $u$ solves
\begin{equation*}
  \bilap u + \L(\Omega^\ast) \lap u=0 \mbox{ in } \Omega^\ast.
\end{equation*}

\bigskip 

 \noindent\textbf{Acknowledgement} \; The author is funded by the Deutsche Forschungsgemeinschaft (DFG, German Research Foundation) - project number 396521072. 

%\bibliographystyle{plain}
%\bibliography{../my_literature}%{Stollenwerk}

\end{document}